\newtheorem{theo}{Theorem}[section]
\newtheorem{lemm}{Lemma}[section]
\newtheorem{prop}{Proposition}[section]
\newtheorem{rema}{Remark}[section]
\numberwithin{equation}{section}
\def\ud{{\rm d}}
\numberwithin{equation}{section}
\def\ud{{\rm d}}
\begin{document}

\title{A new result for the local well-posedness of the Camassa-Holm type equations in critial Besov spaces $B^{1+\frac{1}{p}}_{p,1},1\leq p<+\infty$}
\author{
Weikui $\mbox{Ye}^1$ \footnote{email: 904817751@qq.com} \quad and \quad
Zhaoyang $\mbox{Yin}^{2}$ \footnote{email: mcsyzy@mail.sysu.edu.cn}
\quad and\quad
Yingying $\mbox{Guo}^{3}$ \footnote{email: guoyy35@fosu.edu.cn}\\
$^1\mbox{Institute}$ of Applied Physics and Computational Mathematics,\\
P.O. Box 8009, Beijing 100088, P. R. China\\
$^2\mbox{Department}$ of Mathematics, Sun Yat-sen University,\\
Guangzhou, 510275, China\\
$^3\mbox{School}$ of Mathematics and Big Data, Foshan University,\\
Foshan, 528000, China
}
\date{}
\maketitle
\begin{abstract}
For the famous Camassa-Holm equation, the well-posedness in $B^{1+\frac{1}{p}}_{p,1}(\mathbb{R})$ with $1\leq p\leq2$ and the ill-posedness in $B^{1+\frac{1}{p}}_{p,r}(\mathbb{R})$ with $1\leq p\leq+\infty,\ 1<r\leq+\infty$ had been studied in \cite{d1,d2,glmy}. That is to say, it left an open problem in the critical case $B^{1+\frac{1}{p}}_{p,1}(\mathbb{R})$ with $2<p\leq+\infty$ proposed by Danchin in \cite{d1,d2}. In this paper, we solve this problem. The main difficulty is to prove the uniqueness, which usually needs to use the Moser-type inequality, resulting in the index $p$ belongs to $[1,2]$. To overcome the difficulty, inspired by Linares, Ponce and Thomas \cite{lps}, we combine the Lagrange coordinate transformation and small time conditions to avoid using the Moser-type inequality. As a result, we obtain the local well-posedness for the Camassa-Holm equation in critical Besov spaces $B^{1+\frac{1}{p}}_{p,1}(\mathbb{R})$ with $1\leq p<+\infty$. It is worth mentioning that our method is suitable for many Camassa-Holm type equations such as the Novikov equation and the two-component Camassa-Holm system, which can also improve their index on the local well-posedness.
\end{abstract}
Mathematics Subject Classification: 35Q53, 35B10, 35C05\\
\noindent \textit{Keywords}: Local well-posedness, Camassa-Holm type equations, Critial Besov spaces, Lagrangian coordinate transformation.

\tableofcontents

\section{Introduction}
\par
In this paper, we consider the Cauchy problem for the Camassa-Holm type equations which have attracted much attention in recent twenty years.

The first one of the Camassa-Holm type equations is the following Camassa-Holm (CH) equation \cite{ch}
\begin{equation*}
u_t-u_{xxt}+3uu_{x}=2u_{x}u_{xx}+uu_{xxx}.
\end{equation*}
The CH is completely integrable\cite{c3,cgi,cmc},
has a bi-Hamiltonian structure \cite{c1,or} and other important features. One of the remarkable features is that it has the single peakon solutions
\begin{align*}
\varphi(t,x)=ce^{-|x-ct|},\quad  c\in\mathbb{R}	
\end{align*}
and the multi-peakon solutions \cite{achm,ch,cht}
\begin{align*}
u(t,x)=\sum\limits_{i=1}^{N}p_i(t)e^{-|x-q_i(t)|}
\end{align*}
where $p_i,\ q_i$ satisfy the Hamilton system
\begin{equation*}
\left\{\begin{array}{ll}
\frac{{\ud}p_i}{{\ud}t}=-\frac{\partial H}{\partial q_i}=\sum\limits_{i\neq j}p_ip_j{\rm sign}(q_i-q_j)e^{-|q_i-q_j|},\\
\frac{{\ud}q_i}{{\ud}t}=\frac{\partial H}{\partial p_i}=\sum\limits_{j}p_je^{-|q_i-q_j|}
\end{array}\right.
\end{equation*}
with the Hamiltonian $H=\frac{1}{2}\sum_{i,j=1}^{N}p_ip_je^{-|q_i-q_j|}$. It is shown that those peaked solitons were orbitally stable in the energy space \cite{c5,cs,em,t}. Another remarkable feature of the CH equation is the so-called wave breaking phenomena \cite{c2,ce3,ce6,lio}.
Moreover, the CH equation is locally well-posed and ill-posed, has global strong solutions, global weak solutions, global conservative solutions and dissipative solutions \cite{ce2,ce4,ce5,ce1,d1,d2,liy,glmy,cmo,bc1,bc2,bcz,hr1,hr3,lps}.

The second one is the following Novikov equation \cite{n}
\begin{align*}
u_{t}-u_{xxt}=3uu_{x}u_{xx}+u^{2}u_{xxx}-4u^{2}u_{x}.
\end{align*}
The Novikov equation has cubic nonlinear term which is different from the CH equation who has only quadratic nonlinear term. As well, the Novikov equation is integrable and has a bi-Hamiltonian structure \cite{hw}. The local well-posedness, global strong solutions, wave breaking solutions, global weak solutions of the Novikov equation in Sobolev spaces and Besov spaces were investigated in \cite{wy2,wy3,ylz2}.

The third one is the following two-component Camassa-Holm system \cite{or,ci}
\begin{equation*}
\left\{\begin{array}{ll}
u_{t}+uu_{x}=-\partial_{x}(1-\partial_{xx})^{-1}(u^{2}+\frac{1}{2}u^{2}_{x}+\frac{1}{2}\rho^{2}),\\
\rho_{t}+u\rho_{x}=-u_{x}\rho.
\end{array}\right.
\end{equation*}
The two-component Camassa-Holm system is integrable, and has a bi-Hamiltonian structure, global strong solutions, wave breaking phenomenon, global weak solutions and so on \cite{ci,gl1,gl2,gy1,gy2,ghr1}.

For the well-posedness of the above Camassa-Holm type equations, it was only proved in the spaces $B^{s}_{p,r}$ with $s>\max\{\frac{3}{2},1+\frac{1}{p}\}$ or $s=1+\frac{1}{p}$ with $p\in[1,2]$, $r=1$ ($1+\frac{1}{p}\geq\frac{3}{2}$) in \cite{d1,d2,liy}. Meanwhile, Guo et al. \cite{glmy} established
the ill-posedness for the Camassa-Holm type equations in Besov spaces $B^{1+\frac 1 p}_{p,r}$ with $p\in[1,+\infty],\ r\in(1,+\infty]$. This implies $B^{1+\frac 1 p}_{p,1}$ is the critical Besov space for the Camassa-Holm type equations. However,  whether the above Camassa-Holm type equations will be well-posed or not in $B^{1+\frac{1}{p}}_{p,1}$ with $p\in(2,+\infty]$ ($1+\frac{1}{p}<\frac{3}{2}$) is still an open problem. Therefore, in this paper, we aim to solve this problem in critial Besov spaces $B^{1+\frac{1}{p}}_{p,1}$ with $p\in[1,+\infty)$, which will imply the index $\frac{3}{2}$ is not necessary and can improve the results in many papers, such as \cite{d1,d2,liy}.

The main difficulty is to prove the uniqueness. For instance, one should use the following Moser-type inequality
\begin{align}\label{mosher}
\|fg\|_{{B}^{s_1+s_2-\frac{d}{p}}_{p,1}}\leq C\|f\|_{{B}^{s_1}_{p,1}}\|g\|_{{B}^{s_2}_{p,1}},\quad s_1,s_2\leq \frac{d}{p},\  s_1+s_2>d\max\{0,\frac{2}{p}-1\}
\end{align}
to estimate the term $-\partial_{x}(1-\partial_{xx})^{-1}\Big(u^2+\frac{u^2_x}{2}\Big)$ in CH equation.
That is why one needs the condition $s>\max\{\frac{3}{2},1+\frac{1}{p}\}$ ($s=1+\frac{1}{p},~p\in[1,2]$). To overcome the difficulty, inspired by Linares, Ponce and Thomas \cite{lps}, we use the Lagrange coordinate transformation and small time conditions to investigate the uniqueness for the Camassa-Holm type equations. Indeed, suppose that the time $T>0$ small enough, we can ensure that the characteristic $y(t,\xi)$ in a small time interval $[0,T]$ is a homeomorphism, and then we will obtain the uniqueness without using \eqref{mosher}, see the proof of Theorem \ref{lagrabstr} in Section 3.

To establish the well-posedness for the  Camassa-Holm type equations, we first consider the following Cauchy problem for a general abstract equation
\begin{equation}\label{abstract}
\left\{\begin{array}{ll}
\partial_{t}u+A(u)\partial_{x}u=F(u),&\quad t>0,\quad x\in\mathbb{R},\\
u(t,x)|_{t=0}=u_0(x),&\quad x\in\mathbb{R},
\end{array}\right.
\end{equation}
where $A(u)$ is a polynomial of $u$ and $F$ is called a `good operator' such that for any $\varphi\in \mathcal{C}^{\infty}_0(\mathbb{R})$ and any $\epsilon>0$ small enough, the following fact holds
$$\text{if}\quad u_n\varphi\rightarrow u\varphi\quad \text{in}\quad B^{1+\frac{1}{p}-\epsilon}_{p,1},\quad \text{then}\quad  \langle F(u_n),\varphi\rangle\longrightarrow\langle F(u),\varphi\rangle.$$
This definition is reasonable for the Camassa-Holm type equations. For example, it's easy to prove that $-\partial_{x}(1-\partial_{xx})^{-1}\Big(u^2+\frac{1}{2}u^2_x\Big)$ is a `good operator' by an approximation argument, owing to $\mathcal{C}^{\infty}_0(\mathbb{R})$ is dense in $\mathcal{S}(\mathbb{R})$.

The associated Lagrangian scale of \eqref{abstract} is the following initial valve problem
\begin{equation}\label{ODE}
\left\{\begin{array}{ll}
\frac{{\ud}y}{{\ud}t}=A(u)\big(t,y(t,\xi)\big),&\quad t>0,\quad \xi\in\mathbb{R},\\
y(0,\xi)=\xi,&\quad \xi\in\mathbb{R}.
\end{array}\right.
\end{equation}
Introduce the new variable $U(t,\xi)=u\big(t,y(t,\xi)\big)$. Then, \eqref{abstract} becomes
\begin{equation}\label{lagrange}
\left\{\begin{array}{ll}
U_t=\Big(F(u)\Big)(t,y(t,\xi)):=\widetilde{F}(U,y),&\quad t>0,\quad \xi\in\mathbb{R},\\
U(t,\xi)|_{t=0}=U_0(\xi)=u_0(\xi),&\quad \xi\in\mathbb{R}.
\end{array}\right.
\end{equation}

Before giving our main results, let's introduce two key abstract theorems:
%Our main results can be stated as follows:
\begin{theo}\label{lagrabstr}
Let $u_0\in B^{1+\frac{1}{p}}_{p,1}$ with $p\in[1,\infty)$, $k\in\mathbb{N}^{+}$ and $F$ is a good `operator'. Consider the following conditions for $F,\ \widetilde{F}:$
\begin{align}
&\|F(u)\|_{B^{1+\frac{1}{p}}_{p,1}}\leq C\Big(\|u\|^{k+1}_{B^{1+\frac{1}{p}}_{p,1}}+1\Big);\label{fkl}\\
&\|\widetilde{F}(U,y)-\widetilde{F}(\bar{U},\bar{y})\|_{W^{1,\infty}\cap W^{1,p}}\leq C_{u_0}\Big(\|U-\bar{U}\|_{W^{1,\infty}\cap W^{1,p}}+\|y-\bar{y}\|_{W^{1,\infty}\cap W^{1,p}}\Big);\label{wideFkl}\\
&\|F(u)-F(\bar{u})\|_{B^{1+\frac{1}{p}}_{p,1}}\leq C_{u_0}\|u-\bar{u}\|_{B^{1+\frac{1}{p}}_{p,1}}.\label{Fkl}
\end{align}
Then, there exists a time $T>0$ such that
\begin{itemize}
\item [\rm{(1)}] Existence: If \eqref{fkl} holds, then \eqref{abstract} has a solution $u\in E^p_T:=\mathcal{C}\big([0,T];B^{1+\frac{1}{p}}_{p,1}\big)\cap \mathcal{C}^{1}\big([0,T];B^{\frac{1}{p}}_{p,1}\big)$;
\item [\rm{(2)}] Uniqueness: If \eqref{fkl} and \eqref{wideFkl} hold, then the solution of \eqref{abstract} is unique;
\item [\rm{(3)}] Continuous dependence: If \eqref{fkl}--\eqref{Fkl} hold, then the solution map is continuous from any bounded sunset of $ B^{1+\frac{1}{p}}_{p,1}$ to $ \mathcal{C}\big([0,T];B^{1+\frac{1}{p}}_{p,1}\big)$.
\end{itemize}
That is, the problem \eqref{abstract} is locally well-posed in the sense of Hadamard.
\end{theo}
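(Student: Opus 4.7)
The plan is to treat the three items in order, building on standard transport-equation theory in Besov spaces for existence, then on the Lagrangian reformulation \eqref{ODE}--\eqref{lagrange} for the subtle uniqueness argument, and finally on a Bona--Smith type mollification scheme for continuous dependence. For existence, I would regularize the data via Littlewood--Paley projections $u_0^n = S_n u_0 \in B^{\infty}_{p,1}$, solve the corresponding smooth Cauchy problems by a standard Picard iteration (observing that $A(u)\partial_x u$ is a transport term with smooth coefficient and that $F$ preserves smoothness since $F(u_n)$ makes sense classically), and derive a priori bounds in $B^{1+\frac1p}_{p,1}$ via the standard transport estimate. Combining this with the polynomial bound \eqref{fkl} and the embedding $B^{\frac1p}_{p,1}\hookrightarrow L^\infty$ (applied to $\partial_x A(u^n)$) yields a uniform lifespan $T=T(\|u_0\|_{B^{1+1/p}_{p,1}})$ and a uniform bound for $\{u^n\}$. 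Using the equation itself to control $\partial_t u^n$ in $B^{\frac1p}_{p,1}$, a compactness and diagonal extraction argument produces a limit $u\in L^\infty_T B^{1+\frac1p}_{p,1}$ solving \eqref{abstract}; the `good operator' assumption on $F$ is what legitimizes passing to the limit in the nonlinear term tested against $\varphi\in\mathcal{C}^\infty_0$, and standard continuity-in-time arguments then upgrade $u$ to lie in $E^p_T$.

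For uniqueness, which is the central novelty, I would let $u,\bar u\in E^p_T$ be two solutions with the same data and pass to Lagrangian variables. The flow equation \eqref{ODE} gives $y_\xi(t,\xi)=\exp\bigl(\int_0^t \partial_x A(u)(\tau,y(\tau,\xi))\, \ud\tau\bigr)$, so by choosing $T>0$ sufficiently small depending on $\|u_0\|_{B^{1+1/p}_{p,1}}$ one secures $\tfrac{1}{2}\le y_\xi\le 2$ (and likewise for $\bar y$); consequently $y(t,\cdot),\bar y(t,\cdot)$ are bi-Lipschitz homeomorphisms and $u$ is uniquely reconstructed from $(U,y)$. Subtracting the two Lagrangian systems \eqref{lagrange} and their associated flows and invoking the Lipschitz bound \eqref{wideFkl} together with the elementary $W^{1,\infty}\cap W^{1,p}$ estimate for the ODE difference $y-\bar y$, I would close a Gronwall inequality in $W^{1,\infty}\cap W^{1,p}$ and conclude $U\equiv\bar U$, $y\equiv\bar y$, hence $u\equiv\bar u$ on $[0,T]$. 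The essential point is that the whole difference estimate takes place at the level of $W^{1,\infty}\cap W^{1,p}$, a space which is not critical for the problem, so the Moser-type inequality \eqref{mosher} (the culprit that forces $p\in[1,2]$ in the Eulerian approach) is avoided altogether.

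For continuous dependence, I would take $u_{0,n}\to u_0$ in $B^{1+\frac1p}_{p,1}$, produce solutions $u_n\in E^p_T$ on a common time interval by the existence step, and use the Lipschitz property \eqref{Fkl} combined with the critical transport estimate to show that $u_n\to u$ in $\mathcal{C}([0,T];B^{1+\frac1p}_{p,1})$. Direct energy estimates at the critical regularity lose a derivative in the transport commutator, so I would proceed by a Bona--Smith style approximation: mollify both $u_0$ and $u_{0,n}$ by $S_N$, estimate the smooth differences at one higher regularity, estimate the mollification error in a weaker norm $B^{\frac1p}_{p,1}$ (where standard transport methods close cleanly), and then interpolate or pass $N\to\infty$ uniformly in $n$. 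The main obstacle throughout is the uniqueness step: one must verify carefully that \eqref{wideFkl} is robust enough to absorb the composition terms appearing after the Lagrangian change of variables, and that the regularity of $U=u\circ y$ and of the flow $y$ inherited from $u\in B^{1+\frac1p}_{p,1}$ is indeed sufficient to land in $W^{1,\infty}\cap W^{1,p}$ and make the Gronwall closure rigorous.
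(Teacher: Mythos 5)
Your treatment of existence and uniqueness is essentially the paper's: a priori bounds for an approximating sequence via the transport estimates, compactness plus the ``good operator'' hypothesis to pass to the limit in the nonlinearity, and then the Lagrangian change of variables with $\tfrac12\le y_\xi\le C_{u_0}$ on a small time interval so that \eqref{wideFkl} closes a Gronwall inequality for $(U-\bar U,\,y-\bar y)$ in $W^{1,\infty}\cap W^{1,p}$ and uniqueness follows after composing back with the inverse flow. (The paper builds its approximations by iterating the linearized equation $u^{n+1}_t+u^nu^{n+1}_x=F(u^n)$ with the rough data rather than by mollifying $u_0$, but this difference is immaterial.) Where you genuinely diverge is continuous dependence. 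The paper does not use Bona--Smith: it first obtains $u^n\to u^\infty$ in $\mathcal{C}([0,T];B^{\frac1p}_{p,1})$ from the Lagrangian stability estimate plus interpolation, and then, to recover the top derivative, splits $v^n=\partial_xu^n$ as $w^n+z^n$, where $w^n$ is transported by $u^n$ but with the \emph{limiting} source $\partial_x\big(F(u^\infty)\big)-(u^\infty_x)^2$ and data $\partial_xu^n_0$, while $z^n$ carries the difference of the sources and the difference of the data. Convergence $w^n\to w^\infty$ comes from Lemma \ref{continuous} (continuity of the linear transport flow with respect to the convecting field), and $z^n\to0$ from \eqref{Fkl} and Gronwall; this sidesteps the commutator derivative loss without ever estimating differences above the regularity $1+\frac1p$. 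Your Bona--Smith route instead requires uniform-in-$n$ propagation of the mollification error $\|u^{n,N}-u^n\|_{B^{1+\frac1p}_{p,1}}$ as $N\to\infty$ at the critical index --- precisely the step you leave at ``interpolate or pass $N\to\infty$ uniformly'' --- which can be carried out but is heavier than the paper's splitting, whose only nonlinear input is the Lipschitz bound \eqref{Fkl} at the critical regularity itself.
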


Similarly, for the two-component Camassa-Holm type equations, we can investigate the following abstract equations :
\begin{equation}\label{abstr}
	\left\{\begin{array}{ll}
		\partial_{t}u+A(u)\partial_{x}u=F\big(u,v\big),&\quad t>0,\quad x\in\mathbb{R},\\
		\partial_{t}v+A(u)\partial_{x}v=-\partial_{x}[A(u)]\cdot h(v),&\quad t>0,\quad x\in\mathbb{R},\\
		u(t,x)|_{t=0}=u_0(x),\ v(t,x)|_{t=0}=v_0(x)&\quad x\in\mathbb{R},
	\end{array}\right.
\end{equation}
where $A,\ F$ satisfy the above conditions in Theorem \ref{lagrabstr} and $h$ is a affine function (i.e., $h(v)=av+b$ for some $a,b\in\mathbb{R}$). Define the new variables $U(t,\xi)=u\big(t,y(t,\xi)\big),V(t,\xi)=v\big(t,y(t,\xi)\big)$. Combining the above theorem with \eqref{ODE}, we can obtain the other abstract theorem.
\begin{theo}\label{lagrabstr2}
Let $(u_0,v_{0})\in B^{1+\frac{1}{p}}_{p,1}\times B^{\frac{1}{p}}_{p,1}$ with $p\in[1,\infty)$, $k\in\mathbb{N}^{+}$, $F(u,v)$ is a good `operator'. Assume that the operators $F,\ \widetilde{F}$ satisfy the following conditions:
\begin{align}
&\|F(u,v)\|_{B^{1+\frac{1}{p}}_{p,1}}\leq C\Big(\|u\|^{k+1}_{B^{1+\frac{1}{p}}_{p,1}}+\|v\|^{k+1}_{B^{\frac{1}{p}}_{p,1}}+1\Big);\label{fkl1}\\
&\|\widetilde{F}(U,V,y)-\widetilde{F}(\bar{U},\bar{V},\bar{y})\|_{W^{1,\infty}\cap W^{1,p}}\leq C_{u_0}\Big(\|U-\bar{U}\|_{W^{1,\infty}\cap W^{1,p}}\notag\\
&\qquad\qquad\qquad\qquad\qquad\qquad\qquad\qquad+\|V-\bar{V}\|_{L^{\infty}\cap L^{p}}+\|y-\bar{y}\|_{W^{1,\infty}\cap W^{1,p}}\Big);\label{wideFkl1}\\
&\|F(u,v)-F(\bar{u},\bar{v})\|_{B^{1+\frac{1}{p}}_{p,1}}\leq C_{u_0}\Big(\|u-\bar{u}\|_{B^{1+\frac{1}{p}}_{p,1}}+\|v-\bar{v}\|_{B^{\frac{1}{p}}_{p,1}}\Big).\label{Fkl1}
\end{align}
Then, there exists a time $T>0$ such that
\begin{itemize}
\item [\rm{(1)}] Existence: If  \eqref{fkl1} holds, then \eqref{abstr} has a solution $(u,v)$ in $\mathcal{C}\big([0,T];B^{1+\frac{1}{p}}_{p,1}\big)\times \mathcal{C}\big([0,T];B^{\frac{1}{p}}_{p,1}\big)$;
\item [\rm{(2)}] Uniqueness: If \eqref{fkl1} and \eqref{wideFkl1} hold, then the solution of \eqref{abstr} is unique;
\item [\rm{(3)}] Continuous dependence: If \eqref{fkl1}--\eqref{Fkl1} hold, then the solution map is continuous from any bounded subset of $B^{1+\frac{1}{p}}_{p,1}\times B^{\frac{1}{p}}_{p,1}$ to $\mathcal{C}\big([0,T];B^{1+\frac{1}{p}}_{p,1}\big)\times \mathcal{C}\big([0,T];B^{\frac{1}{p}}_{p,1}\big)$.
\end{itemize}
%(1)Existence: If \eqref{apart}, \eqref{aubar} and \eqref{fkl1} hold, then \eqref{abstr} has a solution $(u,v)$ in $C\Big([0,T];B^{1+\frac{1}{p}}_{p,1}\Big)\times C\Big([0,T];B^{\frac{1}{p}}_{p,1}\Big)$;\\
%(2)Uniqueness: If \eqref{apart}, \eqref{aubar}, \eqref{fkl1} and \eqref{wideFkl1} hold, then the solutions of \eqref{abstr} are unique;\\
%(3)Continuous dependence: If \eqref{apart}, \eqref{aubar}, \eqref{fkl1}, \eqref{wideFkl1} and \eqref{Fkl1} hold, then the solution map is continuous from any bounded subset of $B^{1+\frac{1}{p}}_{p,1}\times B^{\frac{1}{p}}_{p,1}$ to $C\Big([0,T];B^{1+\frac{1}{p}}_{p,1}\Big)\times C\Big([0,T];B^{\frac{1}{p}}_{p,1}\Big)$.\\
That is, the problem \eqref{abstr}  is locally well-posed in the sense of Hadamard.
\end{theo}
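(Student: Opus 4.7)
My plan is to adapt the Lagrangian-coordinate strategy of Theorem \ref{lagrabstr} to the coupled system, exploiting the fact that once we change to the variables $(U,V,y)$ the $v$-equation decouples into a linear ODE along characteristics. Explicitly, since $\partial_t v + A(u)\partial_x v = -(\partial_x A(u))\,h(v)$ and $h(z)=az+b$ is affine, the chain rule gives
\begin{equation*}
V_t(t,\xi) = -\bigl(\partial_x A(u)\bigr)\bigl(t,y(t,\xi)\bigr)\,h\bigl(V(t,\xi)\bigr),
\end{equation*}
which for each fixed $\xi$ is a linear scalar ODE in $V$ whose coefficient lies in $L^\infty_t L^\infty_\xi$ as soon as $A(u)\in B^{1+1/p}_{p,1}\hookrightarrow W^{1,\infty}$. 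I would first set up a Friedrichs-type iteration $(u^{(n)},v^{(n)})$ exactly as in Theorem \ref{lagrabstr} for the $u$-slot, and use the classical transport estimate in $B^{1/p}_{p,1}$ for the $v$-slot, with velocity field $A(u^{(n-1)})$ and source $-\partial_x[A(u^{(n-1)})]\,h(v^{(n-1)})$. Here the product $\partial_x[A(u)]\,h(v)$ is controlled in $B^{1/p}_{p,1}$ using that $B^{1/p}_{p,1}$ is a Banach algebra and that $A$ is a polynomial, so no high-index Moser inequality is needed. Together with \eqref{fkl1} this produces uniform bounds of $u^{(n)}$ in $E^p_T$ and of $v^{(n)}$ in $\mathcal{C}([0,T];B^{1/p}_{p,1})$ on some interval $[0,T]$ depending only on $\|u_0\|_{B^{1+1/p}_{p,1}}+\|v_0\|_{B^{1/p}_{p,1}}$.

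For existence I would then pass to the limit by compactness: the $u^{(n)}$-slot is handled exactly as in the proof of Theorem \ref{lagrabstr}, while for $v^{(n)}$ the uniform $B^{1/p}_{p,1}$-bound combined with an $L^p$-estimate for $\partial_t v^{(n)}$ extracted from the equation yields a strongly convergent subsequence on compact sets, sufficient to identify the limit as a distributional solution (which uses that $F$ is a `good operator'). For uniqueness I pass to Lagrangian variables: two solutions with the same data produce two triples $(U,V,y)$ and $(\bar U,\bar V,\bar y)$; estimating $U-\bar U$ and $y-\bar y$ in $W^{1,\infty}\cap W^{1,p}$ via hypothesis \eqref{wideFkl1} and the ODE \eqref{ODE}, while estimating $V-\bar V$ in $L^\infty\cap L^p$ directly from its linear ODE, yields a closed Gronwall loop. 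Shrinking $T$ so that $y(t,\cdot)$ remains a bi-Lipschitz homeomorphism on $[0,T]$ lets me return from Lagrangian variables to conclude $u=\bar u$ and $v=\bar v$.

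Continuous dependence I would obtain by a Bona--Smith argument: mollify the data to $(u_0^{\ep},v_0^{\ep})$, show the smooth solutions $(u^\ep,v^\ep)$ converge to the smooth reference solution in the strong topology using the Lipschitz bounds \eqref{Fkl1} in the high norm, and bound the gap between the original rough solution and its mollification in a weaker norm through the Lagrangian estimate just established. The main obstacle I expect is the $v$-slot at the critical index $s=1/p$: the transport estimate for $v$ with velocity $A(u)\in B^{1+1/p}_{p,1}$ is borderline (the commutator $[\Delta_j,A(u)\partial_x]$ barely closes in $B^{1/p}_{p,1}$), and in the uniqueness step I must be careful that the loss encountered there can be absorbed because $V-\bar V$ is measured only in $L^\infty\cap L^p$, not in $W^{1,\infty}\cap W^{1,p}$; this asymmetry in norms, reflected precisely in \eqref{wideFkl1}, is what makes the Lagrangian argument succeed without invoking the Moser inequality \eqref{mosher}.
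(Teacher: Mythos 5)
Your existence and uniqueness arguments follow essentially the same route as the paper (which proves Theorem \ref{lagrabstr2} simply by repeating the scheme of Theorem \ref{lagrabstr}): a Friedrichs iteration with the transport estimates of Lemmas \ref{existence}--\ref{priori estimate} (the case $\theta=\frac1p<1+\frac1p$ already covers the $v$-slot, so the commutator issue you worry about is handled there), compactness plus the ``good operator'' property for existence, and the Lagrangian Gronwall loop in $(U,V,y)$ for uniqueness; your observation that the affine $h$ linearizes the $V$-equation is exactly why \eqref{wideFkl1} needs only $\|V-\bar V\|_{L^{\infty}\cap L^{p}}$. This part is sound.

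The genuine gap is in your continuous-dependence step. A Bona--Smith argument needs a Lipschitz (or quantified H\"older) estimate for the solution map \emph{from a low norm of the data to a low norm of the solution}, so that mollification $u_0\mapsto S_ju_0$ produces a difference that is $o(2^{-j})$ in the low norm and can absorb the $O(2^{j})$ growth of the supercritical norms of the mollified solutions that enter the high-norm energy estimate for $u^{j}-u^{j'}$ (for instance through the term $\|u^{j}-u^{j'}\|_{L^{\infty}}\|\partial_{x}u^{j'}\|_{B^{1+1/p}_{p,1}}$). But the only stability estimate the Lagrangian argument provides is
\[
\|u_1-u_2\|_{L^{\infty}\cap L^{p}}\leq C\,\|u_1(0)-u_2(0)\|_{W^{1,\infty}\cap W^{1,p}},
\]
i.e.\ a low norm of the solution difference controlled by an essentially \emph{critical-strength} norm of the data difference, because the Gronwall loop built on \eqref{wideFkl1} cannot be closed without the $W^{1,\cdot}$ norms of $U-\bar U$ and $y-\bar y$. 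Under mollification $\|S_ju_0-u_0\|_{W^{1,\infty}\cap W^{1,p}}\rightarrow0$ with no quantitative rate, so the Bona--Smith bookkeeping does not close; the absence of a low-to-low Lipschitz estimate for $p>2$ is precisely the obstruction this paper is designed to circumvent. The paper instead obtains continuity by interpolating the $B^{0}_{p,\infty}$ estimate up to $B^{1+\frac1p-\epsilon}_{p,1}$, then splitting $\partial_{x}u^{n}=w^{n}+z^{n}$ and invoking Lemma \ref{continuous} (continuity of linear transport flows with respect to the velocity field) for $w^{n}$, and \eqref{Fkl1} together with Gronwall for $z^{n}$; this requires no rate and is the step you should substitute for the Bona--Smith scheme.
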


Applying Theorem \ref{lagrabstr} and \ref{lagrabstr2}, we can obtain our main results (more details see Section 4).
\begin{theo}\label{CH}
Let $u_0\in B^{1+\frac{1}{p}}_{p,1}$ with $p\in[1,\infty)$. Then there exists a time $T>0$ such that the CH equation with the initial data $u_0$ is locally well-posed in the sense of Hadamard.

\end{theo}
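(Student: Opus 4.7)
The plan is to exhibit the Camassa--Holm equation as an instance of \eqref{abstract} and then apply Theorem~\ref{lagrabstr}. Applying $(1-\partial_{xx})^{-1}$ rewrites CH as
$$\partial_t u+u\,\partial_x u=-\partial_x(1-\partial_{xx})^{-1}\Big(u^2+\tfrac{1}{2}u_x^2\Big),$$
so I take $A(u)=u$ (a polynomial in $u$) and $F(u)=-\partial_x(1-\partial_{xx})^{-1}(u^2+\tfrac{1}{2}u_x^2)$, which the introduction already identifies as a `good operator'. It then remains to verify the three quantitative hypotheses \eqref{fkl}, \eqref{wideFkl}, \eqref{Fkl} of Theorem~\ref{lagrabstr}.

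For \eqref{fkl} and \eqref{Fkl} the gain of one derivative by the multiplier $-\partial_x(1-\partial_{xx})^{-1}$ reduces the problem to estimating $u^2+\tfrac{1}{2}u_x^2$ in $B^{1/p}_{p,1}$. Since $B^{1/p}_{p,1}\hookrightarrow L^\infty$ is an algebra on $\mathbb{R}$, one has
$$\|u^2\|_{B^{1/p}_{p,1}}+\|u_x^2\|_{B^{1/p}_{p,1}}\lesssim \|u\|_{L^\infty}\|u\|_{B^{1/p}_{p,1}}+\|u_x\|_{L^\infty}\|u_x\|_{B^{1/p}_{p,1}}\lesssim \|u\|_{B^{1+1/p}_{p,1}}^{2},$$
which yields \eqref{fkl} with $k=1$; factoring $u^2-\bar u^2=(u+\bar u)(u-\bar u)$ and $u_x^2-\bar u_x^2=(u_x+\bar u_x)(u_x-\bar u_x)$ and repeating the estimate gives \eqref{Fkl}. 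This is the step where the paper's strategy deliberately avoids the Moser-type inequality \eqref{mosher}: only the $L^\infty$--algebra property of $B^{1/p}_{p,1}$ is used, so the restriction $p\le 2$ disappears.

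The main obstacle is \eqref{wideFkl}. Writing $(1-\partial_{xx})^{-1}$ as convolution with $\tfrac{1}{2}e^{-|\cdot|}$ and performing the change of variables $z=y(\eta)$, I represent
$$\widetilde F(U,y)(\xi)=\tfrac{1}{2}\!\int_{\mathbb{R}}\mathrm{sign}\bigl(y(\xi)-y(\eta)\bigr)\,e^{-|y(\xi)-y(\eta)|}\Big(U^2(\eta)\,y_\xi(\eta)+\tfrac{1}{2}\,U_\xi^2(\eta)/y_\xi(\eta)\Big)\,\mathrm{d}\eta,$$
where I have used $u_x(y(\eta))=U_\xi(\eta)/y_\xi(\eta)$ to turn the $u_x^2$ contribution into a rational expression in $U_\xi$ and $y_\xi$. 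For $T$ small enough the ODE \eqref{ODE} keeps $y_\xi$ uniformly in a small neighbourhood of $1$, so $1/y_\xi$ is a bounded Lipschitz function of $y_\xi$ and the kernel $e^{-|y(\xi)-y(\eta)|}y_\xi(\eta)$ has uniformly bounded $L^1_\eta$ norm. Differentiating once in $\xi$ only produces additional factors from the exponential and sign kernels (whose pointwise derivatives reproduce the same kernel) times $y_\xi(\xi)$. Consequently $\widetilde F(U,y)-\widetilde F(\bar U,\bar y)$ and its $\xi$-derivative can be bounded in $L^\infty\cap L^p$ via Young's inequality, the elementary estimate $|e^{-a}-e^{-b}|\le|a-b|$, and pointwise factorizations such as $U^2-\bar U^2=(U+\bar U)(U-\bar U)$, producing the Lipschitz estimate \eqref{wideFkl} in $W^{1,\infty}\cap W^{1,p}$. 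Once these three conditions are established, Theorem~\ref{lagrabstr} immediately supplies Theorem~\ref{CH}.
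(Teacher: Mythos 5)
Your proposal is correct and follows essentially the same route as the paper: the same choice of $A$ and $F$, verification of \eqref{fkl} and \eqref{Fkl} via the algebra property of $B^{1/p}_{p,1}\hookrightarrow L^\infty$, and the same Lagrangian representation of $\widetilde F$ with the change of variables $x=y(\eta)$, the monotonicity of $y$, the bound $|e^{-a}-e^{-b}|\le|a-b|$, factorizations, and Young's inequality to get \eqref{wideFkl}. The only point worth making explicit is that differentiating the kernel $\mathrm{sign}(y(\xi)-y(\eta))e^{-|y(\xi)-y(\eta)|}$ in $\xi$ produces a Dirac mass, i.e.\ the local terms $U^2y_\xi+\tfrac12 U_\xi^2/y_\xi$ (equivalently the identity $-(1-\partial_{xx})^{-1}\partial_{xx}=\mathrm{Id}-(1-\partial_{xx})^{-1}$ used in the paper), which your phrasing glosses over but which are handled by the same pointwise factorizations and the lower bound on $y_\xi$.
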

%\begin{theo}\label{DP}
%Let $u_0\in B^{1+\frac{1}{p}}_{p,1}$ with $p\in[1,\infty]$. Then there exists a unique solution $u$ for the Degasperis-Procesi equation in $E^p_T$, and $u$ is local well-posedness in the Hadamard sense.
%\end{theo}
\begin{theo}\label{N}
Let $u_0\in B^{1+\frac{1}{p}}_{p,1}$ with $p\in[1,\infty)$. Then there exists a time $T>0$ such that the Novikov equation with the initial data $u_{0}$ is locally well-posed in the sense of Hadamard.
\end{theo}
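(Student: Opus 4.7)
The plan is to cast the Novikov equation into the abstract form \eqref{abstract} and then invoke Theorem \ref{lagrabstr}. A direct algebraic manipulation (writing $u^2 u_{xxx} = \partial_{xx}(u^2 u_x) - 2u_x^3 - 4uu_x u_{xx}$ and so on) together with applying $(1-\partial_{xx})^{-1}$ shows that the Novikov equation is equivalent to
\begin{equation*}
u_t + u^2 u_x = -\partial_x(1-\partial_{xx})^{-1}\!\left(u^3 + \tfrac{3}{2}uu_x^2\right) - \tfrac{1}{2}(1-\partial_{xx})^{-1}(u_x^3) =: F(u),
\end{equation*}
so that $A(u)=u^2$ is polynomial in $u$ as required by the abstract framework.

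First I would check that $F$ is a `good operator'. Since each summand is a composition of the smoothing operator $(1-\partial_{xx})^{-1}$ (possibly followed by $\partial_x$) with a cubic expression in $u$ and $u_x$, exactly the same approximation argument the authors indicate for the CH nonlinearity $-\partial_x(1-\partial_{xx})^{-1}(u^2 + \tfrac{1}{2}u_x^2)$ carries over, using density of $\mathcal{C}^{\infty}_0$ in $\mathcal{S}$ and the smoothing of the Bessel potential.

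Next I would verify the three quantitative hypotheses \eqref{fkl}, \eqref{wideFkl}, \eqref{Fkl}. The key inputs on the Eulerian side are the Banach algebra property of $B^{1+1/p}_{p,1}(\mathbb{R})$, the Banach algebra property of the critical space $B^{1/p}_{p,1}(\mathbb{R})$ in one space dimension, and the two-derivative gain of $(1-\partial_{xx})^{-1}$ on Besov spaces. Because $u \in B^{1+1/p}_{p,1}$ yields $u_x \in B^{1/p}_{p,1}$, the algebra properties give $u^3 + \tfrac{3}{2}uu_x^2 \in B^{1/p}_{p,1}$ and $u_x^3 \in B^{1/p}_{p,1}$, whence
\begin{equation*}
\|F(u)\|_{B^{1+1/p}_{p,1}} \leq C\|u\|_{B^{1+1/p}_{p,1}}^{3},
\end{equation*}
which is \eqref{fkl} with $k=2$. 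The Lipschitz estimate \eqref{Fkl} follows by the telescoping identity $a^3-b^3=(a-b)(a^2+ab+b^2)$ and $ab^2-\bar a\bar b^2=(a-\bar a)b^2+\bar a(b-\bar b)(b+\bar b)$, combined with the same algebra estimates and the bound $\|u\|,\|\bar u\|_{B^{1+1/p}_{p,1}}\leq C_{u_0}$.

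The main obstacle will be \eqref{wideFkl}. In Lagrangian coordinates with $U(t,\xi)=u(t,y(t,\xi))$ one has $u_x\circ y = U_\xi/y_\xi$, and the convolution operator $(1-\partial_{xx})^{-1}$ becomes an integral against the kernel $\tfrac{1}{2}e^{-|y(t,\xi)-y(t,\eta)|}y_\eta(t,\eta)\,\ud\eta$. Because only $W^{1,\infty}\cap W^{1,p}$ regularity is demanded on the Lagrangian side, no Moser-type product inequality \eqref{mosher} is needed: on the short time interval furnished by the ODE \eqref{ODE} one has the pointwise bounds $\|U\|_{L^\infty},\|U_\xi\|_{L^\infty}\leq C_{u_0}$ and $y_\xi \geq c > 0$, while the exponential kernel is Lipschitz in its arguments. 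Expanding $\widetilde F(U,y)-\widetilde F(\bar U,\bar y)$ term by term (using the product-difference and quotient-difference identities for $U_\xi/y_\xi$, plus the elementary estimate $|e^{-|a|}-e^{-|b|}|\leq |a-b|$) reduces everything to factors of $U-\bar U$, $U_\xi-\bar U_\xi$, or $y-\bar y$, each dominated by the right-hand side of \eqref{wideFkl}. Once the three hypotheses are in hand, Theorem \ref{lagrabstr} yields existence, uniqueness, and continuous dependence of the solution in $\mathcal{C}([0,T];B^{1+1/p}_{p,1})$ for the Novikov equation, completing the proof.
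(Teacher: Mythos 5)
Your proposal is correct and follows essentially the same route as the paper: the same reformulation with $A(u)=u^2$ and $F(u)=-(1-\partial_{xx})^{-1}\bigl(\partial_x(u^3+\tfrac{3}{2}uu_x^2)+\tfrac{1}{2}u_x^3\bigr)$, followed by an appeal to Theorem \ref{lagrabstr}. The paper's own proof of Theorem \ref{N} is in fact only this two-line reduction, so your verification of \eqref{fkl}, \eqref{Fkl} via the algebra property of $B^{1/p}_{p,1}$ and of \eqref{wideFkl} via the Lagrangian kernel estimates simply fills in the details that the authors carry out explicitly only for the Camassa-Holm case.
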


\begin{theo}\label{2CH}
Let $(u_0,\rho_0-1)\in B^{1+\frac{1}{p}}_{p,1}\times B^{\frac{1}{p}}_{p,1}$ with $p\in[1,\infty)$. Then there exists a time $T>0$ such that the two-component Camassa-Holm system with the initial data $(u_0,\rho_0-1)$ is locally well-posed in the sense of Hadamard.
\end{theo}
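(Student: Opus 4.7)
The plan is to realize the two-component Camassa-Holm system as a particular case of the abstract system \eqref{abstr} and then invoke Theorem \ref{lagrabstr2} directly. First, since $\rho$ decays to $1$ rather than $0$, I would set $v=\rho-1$, so that $v_0=\rho_0-1\in B^{\frac{1}{p}}_{p,1}$. The second equation becomes $v_t+uv_x=-u_x(v+1)$, fitting the second line of \eqref{abstr} with $A(u)=u$ (a polynomial in $u$, as required) and $h(v)=v+1$ (an affine function). Substituting $\rho=v+1$ into the first equation gives
\begin{equation*}
u_t+uu_x=-\partial_x(1-\partial_{xx})^{-1}\Bigl(u^2+\tfrac{1}{2}u_x^2+\tfrac{1}{2}(v+1)^2\Bigr)=:F(u,v),
\end{equation*}
and noting that $\partial_x(1-\partial_{xx})^{-1}$ annihilates the constant $\tfrac12$, we may rewrite $F(u,v)=-\partial_x(1-\partial_{xx})^{-1}(u^2+\tfrac12 u_x^2+\tfrac12 v^2+v)$.

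Next I would verify that $F$ is a good operator: using the explicit Green kernel $\tfrac12 e^{-|x|}$ of $(1-\partial_{xx})^{-1}$ and the density of $\mathcal{C}^\infty_0$ in $\mathcal{S}$, if $u_n\varphi\to u\varphi$ in $B^{1+\frac1p-\epsilon}_{p,1}$ then the duality pairing $\langle F(u_n,v_n),\varphi\rangle$ converges to $\langle F(u,v),\varphi\rangle$ after a standard approximation argument.

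The core of the proof is the verification of the three quantitative bounds. For \eqref{fkl1}, since $\partial_x(1-\partial_{xx})^{-1}$ gains one derivative (i.e.\ maps $B^{\frac1p}_{p,1}$ into $B^{1+\frac1p}_{p,1}$ boundedly), and since $B^{\frac1p}_{p,1}$ is an algebra under the Moser product estimate $\|fg\|_{B^{1/p}_{p,1}}\le C(\|f\|_{L^\infty}\|g\|_{B^{1/p}_{p,1}}+\|g\|_{L^\infty}\|f\|_{B^{1/p}_{p,1}})$ together with $B^{1+\frac{1}{p}}_{p,1}\hookrightarrow W^{1,\infty}$, I would obtain $\|F(u,v)\|_{B^{1+1/p}_{p,1}}\lesssim \|u\|_{B^{1+1/p}_{p,1}}^2+\|v\|_{B^{1/p}_{p,1}}^2+\|v\|_{B^{1/p}_{p,1}}$, giving \eqref{fkl1} with $k=1$. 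Bound \eqref{Fkl1} follows analogously by factoring differences $u^2-\bar u^2=(u-\bar u)(u+\bar u)$, etc. For \eqref{wideFkl1}, which is the delicate step, I would not use Besov product estimates at all; instead, writing $\widetilde F(U,V,y)(\xi)=(-\partial_x G)\ast(u^2+\tfrac12 u_x^2+\tfrac12 v^2+v)\bigl(y(t,\xi)\bigr)$ with $G=\tfrac12 e^{-|x|}$, and performing the change of variables to put everything in terms of $(U,V,y)$, the convolution kernel becomes $\tfrac12 e^{-|y(t,\xi)-y(t,\eta)|}y_\xi(t,\eta)$, which is globally Lipschitz in its arguments when $y,\bar y$ are homeomorphisms with bounded Jacobian; pointwise and $L^p$ estimates together with differentiation in $\xi$ then yield the $W^{1,\infty}\cap W^{1,p}$ Lipschitz bound.

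The main obstacle I expect is precisely this verification of \eqref{wideFkl1}: controlling the composition $F(u,v)\circ y$ and its first $\xi$-derivative in $W^{1,\infty}\cap W^{1,p}$ requires tracking the dependence on $y$ through the absolute value $|y(\xi)-y(\eta)|$ appearing in the kernel and exploiting the a priori bounds $c\le y_\xi\le C$ guaranteed by the flow construction on a small time interval. Once \eqref{fkl1}--\eqref{Fkl1} are in hand, Theorem \ref{lagrabstr2} applies verbatim and delivers the three conclusions (existence, uniqueness, continuous dependence) for $(u,v)$ in $\mathcal{C}([0,T];B^{1+\frac1p}_{p,1})\times\mathcal{C}([0,T];B^{\frac1p}_{p,1})$, which after reverting $\rho=v+1$ gives the stated local well-posedness for the two-component Camassa-Holm system with data $(u_0,\rho_0-1)$.
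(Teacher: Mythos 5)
Your proposal is correct and follows essentially the same route as the paper: the paper's proof of Theorem \ref{2CH} consists precisely of setting $\eta=\rho-1$, $A(u)=u$, $h(\eta)=1+\eta$ and $F(u,\eta)=-\partial_{x}(1-\partial_{xx})^{-1}(u^{2}+\frac{1}{2}u^{2}_{x}+\frac{1}{2}\eta^{2}+\eta)$, and then invoking Theorem \ref{lagrabstr2}, with the verification of \eqref{fkl1}--\eqref{Fkl1} carried out exactly as you describe (Moser estimates for \eqref{fkl1} and \eqref{Fkl1}, and the Lagrangian kernel computation with $\frac{1}{2}\le y_\xi\le C_{u_0}$ for \eqref{wideFkl1}, mirroring the detailed treatment given for the CH equation). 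Your write-up is in fact more explicit than the paper's own two-line argument.
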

\begin{rema}
By the new results, we promote the index $p$ to $[1,+\infty)$ for the well-posedness of the Camassa-Holm type equations, which improves the previous results in \cite{d1,d2,liy}, see the following
\begin{align*}
\underset{\text{recent~ results~in\cite{d1,d2,liy}}}{\overset{\text{well-posed}}{B^{1+\frac{1}{p}}_{p,1}(\mathbb{R}),~p\in [1,2]~}}~\Longrightarrow\underset{\text{our results}}{\overset{\text{well-posed}}{B^{1+\frac{1}{p}}_{p,1}(\mathbb{R}),~p\in [1,\infty).}}
\end{align*}
Notice that Guo et al. \cite{glmy} established
the ill-posedness for the Camassa-Holm type equations in $B^{1+\frac 1 p}_{p,r}$ with $p\in[1,+\infty],\ r\in(1,+\infty].$ This implies that $B^{1+\frac 1 p}_{p,1},~p\in[1,+\infty)$ are the critical Besov spaces for the well-posedness of the Camassa-Holm type equations. However, when $p=\infty$, since $B^{0}_{\infty,1}$ is not an algebra, the estimation of $\|\partial_x(1-\partial_{xx})^{-1}(\frac{u_{0x}^2}{2})\|_{B^{1}_{\infty,1}}$ is hard to deal with. We don't know whether the Cauchy problem of the Camasa-Holm type equations is well-posed or not in $B^{1}_{\infty,1}$, which is the only problem of the well-posedness for the Camassa-Holm type equations in Besov spaces. In fact, if $u_0\in B^{1+\epsilon}_{\infty,1}$ with any $\epsilon>0$ ($B^{\epsilon}_{\infty,1}$ is an algebra), similar to the proof of Theorem \ref{lagrabstr}, one can obtain the local well-posedness for the Camassa-Holm type equations.
\end{rema}
\begin{rema}
For $u_0\in H^{1}\cap W^{1,\infty}$, Linares, Ponce and Thomas \cite{lps} established the well-posedness for the CH equation in $\mathcal{C}([0,T];H^{1})\cap L^{\infty}\big(0,T;W^{1,\infty}\big)$ which contains the peakons  $u(t,x)=ce^{-|x-ct|}$. Moreover, they proved that the continuity of the map from $W^{1,\infty}$ into $\mathcal{C}([0,T];W^{1,\infty})$ fails in any time interval $[0,T]$ for any $T>0$ by taking the peakons as an example. But the spaces $B^{1+\frac 1 p}_{p,1}$ we consider in this paper doesn't contain the peakons, and thus we obtain the local well-posedness in $C([0,T];B^{1+\frac 1 p}_{p,1})$, which implies that our results are substantially different from theirs.
\end{rema}

Finally, noting again that the Camassa-Holm type equations is ill-posedness in $B^{1+\frac 1 p}_{p,r}$ with $p\in[1,+\infty],\ r\in(1,+\infty]$ by constructing a special initial data $u_0\in B^{1+\frac 1 p}_{p,r}$ but $u_0\notin W^{1,\infty}$ (see \cite{glmy}), a nature problem is that whether it is well-posed or not in $B^{1+\frac 1 p}_{p,r}$ with any initial data in $ B^{1+\frac 1 p}_{p,r}\cap W^{1,\infty}$? Inspired by the idea of Linares, Ponce and Thomas \cite{lps}, we will give a positive reply in the end of this paper, see Theorem \ref{general}.

The rest of our paper is as follows. In the second section, we introduce some preliminaries which will be used in the sequel. In the third section, we give the proof of Theorem \ref{lagrabstr}--\ref{lagrabstr2} by using the Lagrangian coordinate transformation. In the last section, by applying the abstract theorems, we establish the local well-posedness for the Cauchy problem of several Camassa-Holm type (CH, Novikov, two-component Camassa-Holm) equations in critical Besov spaces $B^{1+\frac{1}{p}}_{p,1}$ with $p\in[1,+\infty)$, which is a new result.

\section{Preliminaries}
\par
In this section, we first recall some basic properties on the Littlewood-Paley theory, which can be found in \cite{book}.

Let $\chi$ and $\varphi$ be a radical, smooth, and valued in the interval $[0,1]$, belonging respectively to $\mathcal{D}(\mathbf{B})$ and $\mathcal{D}(\mathbf{C})$, where $\mathbf{B}=\{\xi\in\mathbb{R}^d:|\xi|\leq\frac 4 3\},\ \mathbf{C}=\{\xi\in\mathbb{R}^d:\frac 3 4\leq|\xi|\leq\frac 8 3\}$.
Denote $\mathcal{F}$ by the Fourier transform and $\mathcal{F}^{-1}$ by its inverse.
For any $u\in\mathcal{S}'(\mathbb{R}^d)$,  all $j\in\mathbb{Z}$, define
$\Delta_j u=0$ for $j\leq -2$; $\Delta_{-1} u=\mathcal{F}^{-1}(\chi\mathcal{F}u)$; $\Delta_j u=\mathcal{F}^{-1}(\varphi(2^{-j}\cdot)\mathcal{F}u)$ for $j\geq 0$; and $S_j u=\sum_{j'<j}\Delta_{j'}u$.

Let $s\in\mathbb{R},\ 1\leq p,r\leq\infty.$ The nonhomogeneous Besov space $B^s_{p,r}(\mathbb{R}^d)$ is defined by
$$  B^s_{p,r}=B^s_{p,r}(\mathbb{R}^d)=\Big\{u\in S'(\mathbb{R}^d):\|u\|_{B^s_{p,r}}=\big\|(2^{js}\|\Delta_j u\|_{L^p})_j \big\|_{l^r(\mathbb{Z})}<\infty\Big\}.$$

The nonhomogeneous Sobolev space is defined by
$$H^{s}=H^{s}(\mathbb{R}^d)=\Big\{u\in S'(\mathbb{R}^d):\ u\in L^2_{loc}(\mathbb{R}^d),\ \|u\|^2_{H^s}=\int_{\mathbb{R}^d}(1+|\xi|^2)^s|\mathcal{F}u(\xi)|^2{\ud}\xi<\infty\Big\}. $$

The nonhomogeneous Bony's decomposition is defined by
$uv=T_{u}v+T_{v}u+R(u,v)$ with
$$T_{u}v=\sum_{j}S_{j-1}u\Delta_{j}v,\ \ R(u,v)=\sum_{j}\sum_{|j'-j|\leq 1}\Delta_{j}u\Delta_{j'}v.$$

Naturally, we introduce some properties about Besov spaces. For more details, see \cite{book}.
\begin{prop}\label{Besov}\cite{book}
Let $s\in\mathbb{R},\ 1\leq p,p_1,p_2,r,r_1,r_2\leq\infty.$  \\
{\rm(1)} $B^s_{p,r}$ is a Banach space, and is continuously embedded in $\mathcal{S}'$. \\
{\rm(2)} If $r<\infty$, then $\lim\limits_{j\rightarrow\infty}\|S_j u-u\|_{B^s_{p,r}}=0$. If $p,r<\infty$, then $\mathcal{C}_0^{\infty}$ is dense in $B^s_{p,r}$. \\
{\rm(3)} If $p_1\leq p_2$ and $r_1\leq r_2$, then $ B^s_{p_1,r_1}\hookrightarrow B^{s-d(\frac 1 {p_1}-\frac 1 {p_2})}_{p_2,r_2}. $
If $s_1<s_2$, then the embedding $B^{s_2}_{p,r_2}\hookrightarrow B^{s_1}_{p,r_1}$ is locally compact. \\
{\rm(4)} $B^s_{p,r}\hookrightarrow L^{\infty} \Leftrightarrow s>\frac d p\ \text{or}\ s=\frac d p,\ r=1$. \\
{\rm(5)} Fatou property: if $(u_n)_{n\in\mathbb{N}}$ is a bounded sequence in $B^s_{p,r}$, then an element $u\in B^s_{p,r}$ and a subsequence $(u_{n_k})_{k\in\mathbb{N}}$ exist such that
$$ \lim_{k\rightarrow\infty}u_{n_k}=u\ \text{in}\ \mathcal{S}'\quad \text{and}\quad \|u\|_{B^s_{p,r}}\leq C\liminf_{k\rightarrow\infty}\|u_{n_k}\|_{B^s_{p,r}}. $$
{\rm(6)} Let $m\in\mathbb{R}$ and $f$ be a $S^m$-mutiplier (i.e. f is a smooth function and satisfies that $\forall\ \alpha\in\mathbb{N}^d$,
$\exists\ C=C(\alpha)$ such that $|\partial^{\alpha}f(\xi)|\leq C(1+|\xi|)^{m-|\alpha|},\ \forall\ \xi\in\mathbb{R}^d)$.
Then the operator $f(D)=\mathcal{F}^{-1}(f\mathcal{F})$ is continuous from $B^s_{p,r}$ to $B^{s-m}_{p,r}$.
\end{prop}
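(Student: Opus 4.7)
The plan is to establish each of the six items from the dyadic Littlewood--Paley characterization of $B^s_{p,r}$, with Bernstein's inequality as the key tool throughout. Since $\Delta_j u$ is frequency-localized in an annulus of radius $\sim 2^j$, Bernstein gives $\|\Delta_j u\|_{L^q}\lesssim 2^{jd(1/p-1/q)}\|\Delta_j u\|_{L^p}$ for $p\leq q$; this, combined with completeness of $L^p$ and of the weighted $\ell^r$, is essentially everything one needs.

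For (1), I would verify the norm axioms and then obtain completeness by a Cauchy-sequence argument applied blockwise in $L^p$ and globally in $\ell^r$; continuous embedding into $\mathcal{S}'$ follows by pairing $u=\sum_j \Delta_j u$ against a Schwartz function and summing dyadically. Item (2) reduces to the tail-summability $\sum_{j'\geq j}2^{j's}\|\Delta_{j'}u\|_{L^p}\to 0$, which requires $r<\infty$; density of $\mathcal{C}^\infty_0$ then combines mollification with spatial truncation when $p,r<\infty$. Item (3) is a direct consequence of Bernstein combined with monotonicity of $\ell^r$; the local compactness for $s_1<s_2$ uses Rellich on each dyadic block together with tail-smallness coming from the gap $s_2-s_1>0$. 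Item (4) is just $\|u\|_{L^\infty}\leq \sum_j\|\Delta_j u\|_{L^\infty}\lesssim \sum_j 2^{jd/p}\|\Delta_j u\|_{L^p}$, which is $\ell^1$-summable exactly in the stated borderline cases.

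The Fatou property in (5) is the most delicate step: one extracts via a diagonal argument a subsequence for which $\Delta_j u_{n_k}$ converges weakly in $L^p$ for every fixed $j$, glues the limits into a tempered distribution $u$, and combines weak lower semicontinuity of the $L^p$ norm with Fatou's lemma on the outer $\ell^r$-sum to get the norm bound. Item (6) is a classical Mikhlin-type multiplier result, implemented here as a kernel estimate $\|\mathcal{F}^{-1}(f\cdot\varphi(2^{-j}\cdot))\|_{L^1}\lesssim 2^{jm}$ together with Young's inequality block by block. Since the entire proposition is a compilation of completely standard Littlewood--Paley facts stated in full detail in \cite{book}, the in-paper ``proof'' would be nothing more than a pointer to that reference. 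The only place where friction ever arises is (5) when $p$ or $r$ equals $\infty$, where weak versus weak-$\ast$ convergence has to be disentangled, but the present paper does not invoke that borderline case.
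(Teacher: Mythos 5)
The paper offers no proof of this proposition at all --- it is quoted directly from \cite{book} --- and your outline (Bernstein block estimates, tail summability, diagonal extraction for Fatou, kernel-plus-Young for the multiplier) is exactly the standard Littlewood--Paley argument given in that reference, so the two are consistent. The one place your sketch is genuinely one-sided is item (4): you only argue the sufficiency of $s>\frac{d}{p}$ (or $s=\frac{d}{p},\ r=1$) for the embedding into $L^{\infty}$, whereas the stated equivalence also needs the converse, which requires exhibiting an unbounded element of $B^{d/p}_{p,r}$ for $r>1$ (e.g.\ a lacunary/logarithmically divergent series); since the proposition is a cited compilation rather than something the paper proves, this omission has no bearing on the paper itself.
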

\begin{prop}\cite{book}
Let $s\in\mathbb{R},\ 1\leq p,r\leq\infty.$
\begin{equation*}\left\{
\begin{array}{l}
B^s_{p,r}\times B^{-s}_{p',r'}\longrightarrow\mathbb{R},  \\
(u,\phi)\longmapsto \sum\limits_{|j-j'|\leq 1}\langle \Delta_j u,\Delta_{j'}\phi\rangle,
\end{array}\right.
\end{equation*}
defines a continuous bilinear functional on $B^s_{p,r}\times B^{-s}_{p',r'}$. Denote by $Q^{-s}_{p',r'}$ the set of functions $\phi$ in $\mathcal{S}'$ such that $\|\phi\|_{B^{-s}_{p',r'}}\leq 1$. If $u$ is in $\mathcal{S}'$, then we have
$$\|u\|_{B^s_{p,r}}\leq C\sup_{\phi\in Q^{-s}_{p',r'}}\langle u,\phi\rangle.$$
\end{prop}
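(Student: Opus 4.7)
The plan is to prove the two assertions separately: first the continuity of the bilinear pairing, then the sharp lower bound for the Besov norm by duality.

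\textbf{Step 1: Continuity of the bilinear form.} I would start by applying Hölder's inequality pointwise in the integrand for each term $\langle\Delta_j u,\Delta_{j'}\phi\rangle$, giving $|\langle\Delta_j u,\Delta_{j'}\phi\rangle|\leq \|\Delta_j u\|_{L^p}\|\Delta_{j'}\phi\|_{L^{p'}}$. Next I would insert the Besov weights by writing $1=2^{js}\cdot 2^{-j's}\cdot 2^{(j'-j)s}$; because the sum only runs over $|j-j'|\leq 1$, the factor $2^{(j'-j)s}$ is bounded uniformly by $2^{|s|}$. After that, Hölder's inequality in $\ell^r(\mathbb{Z})\times\ell^{r'}(\mathbb{Z})$ applied to the indices yields the bound
\[
\Big|\sum_{|j-j'|\leq 1}\langle\Delta_j u,\Delta_{j'}\phi\rangle\Big|\leq C\,\|u\|_{B^s_{p,r}}\|\phi\|_{B^{-s}_{p',r'}},
\]
which proves the continuity on $B^s_{p,r}\times B^{-s}_{p',r'}$ and simultaneously shows the series converges absolutely.

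\textbf{Step 2: Lower bound via duality.} By definition $\|u\|_{B^s_{p,r}}=\|(a_j)_j\|_{\ell^r}$ with $a_j=2^{js}\|\Delta_j u\|_{L^p}$. Using $\ell^r$-$\ell^{r'}$ duality (and an $\varepsilon$-approximation when $r=\infty$), pick a sequence $(c_j)_j$ with $\|(c_j)\|_{\ell^{r'}}\leq 1$ such that $\sum_j c_j a_j\geq (1-\varepsilon)\|u\|_{B^s_{p,r}}$. For each $j$, $L^p$-$L^{p'}$ duality (again with an $\varepsilon$-loss if $p=\infty$) gives $\psi_j\in L^{p'}$ with $\|\psi_j\|_{L^{p'}}\leq 1$ and $\int(\Delta_j u)\psi_j\,dx\geq (1-\varepsilon)\|\Delta_j u\|_{L^p}$. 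I then set
\[
\phi:=\sum_{j} c_j\,2^{js}\,\Delta_j\psi_j,
\]
so that $\phi$ has a natural Littlewood-Paley decomposition. Quasi-orthogonality ($\Delta_{j'}\Delta_j=0$ unless $|j-j'|\leq 1$) combined with the uniform boundedness of the $\Delta_j$ on $L^{p'}$ gives $2^{-j's}\|\Delta_{j'}\phi\|_{L^{p'}}\leq C|c_{j'}|$ up to a few neighboring indices, whence $\|\phi\|_{B^{-s}_{p',r'}}\leq C$. Evaluating the bilinear pairing and again using quasi-orthogonality to collapse the double sum, one obtains $\sum_{|j-j'|\leq 1}\langle\Delta_j u,\Delta_{j'}\phi\rangle\geq c(1-\varepsilon)^2\|u\|_{B^s_{p,r}}$. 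Dividing by $\|\phi\|_{B^{-s}_{p',r'}}$ and letting $\varepsilon\to 0$ yields the claimed inequality.

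\textbf{Main obstacles.} The routine case ($1<p,r<\infty$) is straightforward. The real care is required at the endpoints: when $r=\infty$ or $p=\infty$, neither $\ell^r$ nor $L^p$ duals are realized exactly, so one must build $(c_j)$ and $\psi_j$ only up to $\varepsilon$-optimality and then pass to the limit. A second technical point is the convergence of the series defining $\phi$ and the rigorous identification of $\Delta_{j'}\phi$ with a finite sum of $\Delta_{j'}\Delta_j\psi_j$; this requires that partial sums converge in $\mathcal{S}'$, which follows from the $\ell^{r'}$ control on $(c_j)$ together with mild decay of the $\psi_j$ contributions in the low-frequency block $\Delta_{-1}$. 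Finally, if $\|u\|_{B^s_{p,r}}=\infty$, one should truncate $u$ via $S_N u$, apply the inequality to the truncation, and then let $N\to\infty$ using the Fatou property for Besov spaces recorded in Proposition~\ref{Besov}.
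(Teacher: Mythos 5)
This proposition is quoted in the paper from the reference \cite{book} without any proof, so there is no in‑paper argument to compare against; your proposal reproduces what is essentially the standard duality proof from that reference (Hölder in $L^p\times L^{p'}$ and in $\ell^r\times\ell^{r'}$ along the three diagonals $|j-j'|\le 1$ for continuity, and a near‑optimizer $\phi=\sum_j c_j 2^{js}\Delta_j\psi_j$ built from $\varepsilon$-optimal $\ell^{r'}$ and $L^{p'}$ dual elements for the lower bound, using $\sum_j\Delta_j=\mathrm{Id}$ and the quasi‑orthogonality of the blocks to collapse the pairing to $\sum_j c_j 2^{js}\langle\Delta_j u,\psi_j\rangle$). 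The argument is correct, including your handling of the endpoint cases $p=\infty$, $r=\infty$ and of the convergence of the series defining $\phi$ in $\mathcal{S}'$.
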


The useful interpolation inequalities are given as follows.
\begin{prop}\label{prop}\cite{book}
{\rm(1)} If $s_1<s_2$, $\lambda\in (0,1)$ and $(p,r)\in[1,\infty]^2$, then we have
\begin{align*}
&\|u\|_{B^{\lambda s_1+(1-\lambda)s_2}_{p,r}}\leq \|u\|_{B^{s_1}_{p,r}}^{\lambda}\|u\|_{B^{s_2}_{p,r}}^{1-\lambda},\\
&\|u\|_{B^{\lambda s_1+(1-\lambda)s_2}_{p,1}}\leq\frac{C}{s_2-s_1}\Big(\frac{1}{\lambda}+\frac{1}{1-\lambda}\Big) \|u\|_{B^{s_1}_{p,\infty}}^{\lambda}\|u\|_{B^{s_2}_{p,\infty}}^{1-\lambda}.
\end{align*}
{\rm(2)} If $s\in\mathbb{R},\ 1\leq p\leq\infty,\ \varepsilon>0$, a constant $C=C(\varepsilon)$ exists such that
$$ \|u\|_{B^s_{p,1}}\leq C\|u\|_{B^s_{p,\infty}}\ln\Big(e+\frac {\|u\|_{B^{s+\varepsilon}_{p,\infty}}}{\|u\|_{B^s_{p,\infty}}}\Big). $$
\end{prop}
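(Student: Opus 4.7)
The plan is to prove both statements by exploiting the definition of the Besov norm as a weighted $\ell^r$ sum of dyadic blocks, so everything reduces to elementary inequalities on sequences. For the first inequality of part (1), I would simply write
\begin{equation*}
2^{j(\lambda s_1+(1-\lambda)s_2)}\|\Delta_j u\|_{L^p}=\bigl(2^{js_1}\|\Delta_j u\|_{L^p}\bigr)^{\lambda}\bigl(2^{js_2}\|\Delta_j u\|_{L^p}\bigr)^{1-\lambda},
\end{equation*}
and then apply H\"older's inequality in the $j$-variable with conjugate exponents $1/\lambda$ and $1/(1-\lambda)$ (valid also for $r=\infty$ by a direct pointwise estimate). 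This immediately gives the product bound $\|u\|_{B^{s_1}_{p,r}}^{\lambda}\|u\|_{B^{s_2}_{p,r}}^{1-\lambda}$.

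For the second inequality of part (1) and the logarithmic inequality in part (2), the strategy is dyadic splitting at a threshold $N\in\mathbb{Z}$ to be optimized. For part (1), I would split the $\ell^1$ sum
\begin{equation*}
\|u\|_{B^{\lambda s_1+(1-\lambda)s_2}_{p,1}}=\sum_{j\geq-1} 2^{j(\lambda s_1+(1-\lambda)s_2)}\|\Delta_j u\|_{L^p}
\end{equation*}
into $j\leq N$ and $j>N$. On the low-frequency part, I factor out $\|u\|_{B^{s_1}_{p,\infty}}$ and use the geometric series $\sum_{j\leq N} 2^{j(1-\lambda)(s_2-s_1)}\leq C\,2^{N(1-\lambda)(s_2-s_1)}/[(1-\lambda)(s_2-s_1)]$; on the high-frequency part I factor out $\|u\|_{B^{s_2}_{p,\infty}}$ and get $C\,2^{-N\lambda(s_2-s_1)}/[\lambda(s_2-s_1)]$. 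Choosing $N$ so that $2^{N(s_2-s_1)}\sim\|u\|_{B^{s_2}_{p,\infty}}/\|u\|_{B^{s_1}_{p,\infty}}$ balances the two terms and produces the advertised factor $\tfrac{C}{s_2-s_1}\bigl(\tfrac{1}{\lambda}+\tfrac{1}{1-\lambda}\bigr)$ multiplying $\|u\|_{B^{s_1}_{p,\infty}}^{\lambda}\|u\|_{B^{s_2}_{p,\infty}}^{1-\lambda}$.

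For part (2), the same splitting is applied to $\|u\|_{B^s_{p,1}}=\sum_{j\geq-1}2^{js}\|\Delta_j u\|_{L^p}$: the low-frequency piece is trivially bounded by $(N+2)\|u\|_{B^s_{p,\infty}}$ (no geometric gain, only a linear number of terms), while the high-frequency piece is estimated by $\|u\|_{B^{s+\varepsilon}_{p,\infty}}\sum_{j>N}2^{-j\varepsilon}\leq C(\varepsilon)\,2^{-N\varepsilon}\|u\|_{B^{s+\varepsilon}_{p,\infty}}$. Choosing $N\sim\varepsilon^{-1}\log_2\!\bigl(\|u\|_{B^{s+\varepsilon}_{p,\infty}}/\|u\|_{B^s_{p,\infty}}\bigr)$ equates the two contributions and yields
\begin{equation*}
\|u\|_{B^s_{p,1}}\leq C(\varepsilon)\|u\|_{B^s_{p,\infty}}\ln\!\Bigl(e+\tfrac{\|u\|_{B^{s+\varepsilon}_{p,\infty}}}{\|u\|_{B^s_{p,\infty}}}\Bigr),
\end{equation*}
after adding the $e$ inside the logarithm to absorb the degenerate case $\|u\|_{B^{s+\varepsilon}_{p,\infty}}\leq\|u\|_{B^s_{p,\infty}}$ and to make the bound valid for all $u\neq 0$.

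The only subtle point, and the one I expect to require the most care, is tracking the explicit dependence on $\lambda$ and $s_2-s_1$ in part (1) so that the constant comes out in the exact form $\tfrac{C}{s_2-s_1}(\tfrac{1}{\lambda}+\tfrac{1}{1-\lambda})$: this forces one to estimate the geometric series denominators $1-2^{-(1-\lambda)(s_2-s_1)}$ and $1-2^{-\lambda(s_2-s_1)}$ from below by $c\lambda(s_2-s_1)\ln 2$ and $c(1-\lambda)(s_2-s_1)\ln 2$ respectively, rather than just using a crude uniform bound. Everything else is bookkeeping; no deep analytic ingredient beyond H\"older and geometric series is used.
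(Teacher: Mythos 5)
The paper offers no proof of this proposition---it is quoted verbatim from the cited monograph \cite{book}---and your argument is precisely the standard one found there: the pointwise factorization $2^{js_\lambda}\|\Delta_ju\|_{L^p}=(2^{js_1}\|\Delta_ju\|_{L^p})^{\lambda}(2^{js_2}\|\Delta_ju\|_{L^p})^{1-\lambda}$ plus H\"older in $j$ for the first estimate, and a dyadic splitting at an optimized threshold $N$ (geometric series on each side, then balancing) for the other two, so the proposal is correct and essentially identical to the reference's proof. The only point worth flagging is cosmetic: your geometric-series bounds honestly produce a constant of the form $C\bigl(1+\tfrac{1}{(s_2-s_1)\lambda(1-\lambda)}\bigr)$, which coincides with the advertised $\tfrac{C}{s_2-s_1}\bigl(\tfrac{1}{\lambda}+\tfrac{1}{1-\lambda}\bigr)$ only when $s_2-s_1$ stays bounded (as it always does in this paper's applications), so no correction to your argument is needed.
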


We now give the 1-D Moser-type estimates which we will use in the following.
\begin{lemm}\label{product}\cite{book,liy}
The following estimates hold:\\
{\rm(1)} For any $s>0$ and any $p,\ r$ in $[1,\infty]$, the space $L^{\infty} \cap B^s_{p,r}$ is an algebra, and a constant $C=C(s)$ exists such that
\begin{align*}
&\|uv\|_{B^s_{p,r}}\leq C(\|u\|_{L^{\infty}}\|v\|_{B^s_{p,r}}+\|u\|_{B^s_{p,r}}\|v\|_{L^{\infty}}), \\
&\|u\partial_{x}v\|_{B^s_{p,r}}\leq C(\|u\|_{B^{s+1}_{p,r}}\|v\|_{L^{\infty}}+\|u\|_{L^{\infty}}\|\partial_{x}v\|_{B^s_{p,r}}).
\end{align*}
{\rm(2)} If $1\leq p,r\leq \infty,\ s_1\leq s_2,\ s_2>\frac{1}{p} (s_2 \geq \frac{1}{p}\ \text{if}\ r=1)$ and $s_1+s_2>\max(0, \frac{2}{p}-1)$, there exists $C=C(s_1,s_2,p,r)$ such that
$$ \|uv\|_{B^{s_1}_{p,r}}\leq C\|u\|_{B^{s_1}_{p,r}}\|v\|_{B^{s_2}_{p,r}}. $$
\end{lemm}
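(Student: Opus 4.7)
The natural tool is Bony's paraproduct decomposition $uv=T_uv+T_vu+R(u,v)$ already introduced in this section. My plan is to estimate each of the three pieces separately in Besov norms; in each case the precise index conditions appearing in the statement will arise from ensuring summability in the dyadic blocks.

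For the first inequality in (1), spectral localization places $\mathrm{supp}\,\mathcal{F}(S_{j-1}u\,\Delta_jv)$ inside an annulus of size $\sim 2^j$, so combining Bernstein's inequality with $\|S_{j-1}u\|_{L^\infty}\lesssim\|u\|_{L^\infty}$ gives
$$\|T_uv\|_{B^s_{p,r}}\lesssim\|u\|_{L^\infty}\|v\|_{B^s_{p,r}},\qquad \|T_vu\|_{B^s_{p,r}}\lesssim\|v\|_{L^\infty}\|u\|_{B^s_{p,r}}$$
for every $s\in\mathbb{R}$. For the remainder $R(u,v)$, each summand $\Delta_ju\,\Delta_{j'}v$ with $|j-j'|\leq 1$ has Fourier support in a ball of radius $\sim 2^j$, so $\Delta_kR(u,v)$ only sees indices $j\gtrsim k$; placing one factor in $L^\infty$ and the other in $L^p$ and summing in $\ell^r$ by Young's convolution inequality, the condition $s>0$ is exactly what makes the resulting geometric series converge. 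Adding the three pieces yields the algebra inequality. The second inequality in (1) then follows from the first via the identity $u\partial_xv=\partial_x(uv)-v\partial_xu$ together with $\|\partial_xf\|_{B^s_{p,r}}\sim\|f\|_{B^{s+1}_{p,r}}$, which ensures the $L^\infty$ norm lands on $v$ rather than on $\partial_xv$.

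For part (2) the decomposition is the same but must respect two different regularities. The paraproduct $T_vu$ is controlled by $\|v\|_{L^\infty}\|u\|_{B^{s_1}_{p,r}}$, and the hypothesis $s_2>\tfrac{1}{p}$ (or $s_2=\tfrac{1}{p}$ when $r=1$) is precisely the embedding from Proposition \ref{Besov}(4) needed to convert $\|v\|_{B^{s_2}_{p,r}}$ into $\|v\|_{L^\infty}$. For $T_uv$ the low-frequency estimate $\|S_{j-1}u\|_{L^\infty}\lesssim 2^{j(1/p-s_1)_+}\|u\|_{B^{s_1}_{p,r}}$ (via Bernstein) combined with $s_1\leq s_2$ gives a bound of the form $\|u\|_{B^{s_1}_{p,r}}\|v\|_{B^{s_2}_{p,r}}$. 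Finally, for $R(u,v)$ one sums $\sum_j 2^{js_1}\|\Delta_ju\|_{L^p}\|\Delta_jv\|_{L^p}$ after applying Hölder and possibly Bernstein to accommodate the Lebesgue exponent; the threshold $s_1+s_2>\max(0,2/p-1)$ emerges as the requirement that this sum converges absolutely, with the $2/p-1$ branch specifically addressing $p>2$, where one must first relocate a factor into a larger $L^q$ space via Bernstein before pairing with Hölder.

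The step I expect to be most delicate is the second estimate in (1): a naive Bony decomposition of $u\partial_xv$ produces $\|\partial_xv\|_{L^\infty}$ on the right-hand side instead of the desired $\|v\|_{L^\infty}$. The resolution --- either commuting the derivative out of the paraproduct $T_{\partial_xv}u$ through $S_{j-1}\partial_xv=\partial_xS_{j-1}v$, or invoking the identity $u\partial_xv=\partial_x(uv)-v\partial_xu$ so that the $L^\infty$ norm naturally lands on $v$ --- is the main bookkeeping point that distinguishes this second inequality from a direct application of the first.
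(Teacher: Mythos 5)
The paper does not prove this lemma: it is quoted from the cited references \cite{book,liy}, whose proofs are precisely the Bony-decomposition argument you outline. Your treatment of the first inequality in (1) and the overall architecture of (2) follow that standard route (paraproducts controlled by an $L^\infty$ factor for every $s$; the remainder forcing $s>0$, resp.\ $s_1+s_2>\max(0,\frac{2}{p}-1)$; the embedding $B^{s_2}_{p,r}\hookrightarrow L^\infty$ accounting for $s_2>\frac1p$, or $s_2=\frac1p$ with $r=1$). Two points, however, are wrong as written. First, the route you announce for the second inequality in (1) --- the identity $u\partial_xv=\partial_x(uv)-v\partial_xu$ together with ``$\|\partial_xf\|_{B^s_{p,r}}\sim\|f\|_{B^{s+1}_{p,r}}$'' --- does not work in nonhomogeneous Besov spaces: the claimed equivalence fails on the low-frequency block ($\|f\|_{B^{s+1}_{p,r}}$ controls $\|\Delta_{-1}f\|_{L^p}$, which $\|\partial_xf\|_{B^s_{p,r}}+\|f\|_{L^\infty}$ does not when $p<\infty$), and the term $v\partial_xu$ produces $\|v\|_{B^s_{p,r}}$ and $\|\partial_xu\|_{L^\infty}$, neither of which is controlled by the right-hand side of the target estimate. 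Only the second alternative you mention is viable: decompose $u\partial_xv=T_u\partial_xv+T_{\partial_xv}u+R(u,\partial_xv)$, commute the derivative through the dyadic blocks ($S_{j-1}\partial_xv=\partial_xS_{j-1}v$, $\Delta_{j'}\partial_xv=\partial_x\Delta_{j'}v$), and use Bernstein to trade $\partial_x$ for a factor $2^{j}$ absorbed into $\|u\|_{B^{s+1}_{p,r}}$; note that this commutation is needed in the remainder term as well as in the paraproduct.

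Second, in part (2) you attribute the branch $\frac2p-1$ of the threshold to $p>2$; it is the opposite. For $p\geq2$ one has $\frac2p\leq1$, H\"older gives $L^p\cdot L^p\to L^{p/2}$ directly, the remainder estimate closes under $s_1+s_2>0$, and $\max(0,\frac2p-1)=0$. It is for $p<2$ that $L^{p/2}$ is unavailable and one must first move one factor from $L^p$ to $L^{p'}$ by Bernstein at the cost of $2^{j(\frac2p-1)}$ per block, which is exactly where the requirement $s_1+s_2>\frac2p-1>0$ originates. With these two corrections your outline matches the proofs in the cited references.
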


Here is the Gronwall lemma.
\begin{lemm}\label{gwl}\cite{book}
Let $m(t),\ a(t)\in \mathcal{C}^1([0,T]),\ m(t),\ a(t)>0$. Let $b(t)$ is a continuous function on $[0,T]$. Suppose that, for all $t\in [0,T]$,
$$\frac{1}{2}\frac{{\ud}}{{\ud}t}m^2(t)\leq b(t)m^2(t)+a(t)m(t).$$
Then for any time $t$ in $[0,T]$, we have
$$m(t)\leq m(0)\exp\int_0^t b(\tau){\ud}\tau+\int_0^t a(\tau)\exp\big(\int_{\tau}^tb(t'){\ud}t'\big){\ud}\tau.$$
\end{lemm}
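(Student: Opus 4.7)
The plan is to reduce the squared differential inequality to a standard first-order linear inequality and then apply an integrating factor. First, since $m\in\mathcal{C}^1([0,T])$ with $m(t)>0$, the function $m^2$ is also $\mathcal{C}^1$ with $\frac{\ud}{\ud t}m^2(t)=2m(t)m'(t)$. Substituting this identity into the hypothesis and dividing both sides by the strictly positive quantity $m(t)$, I obtain the linear differential inequality
$$m'(t)\leq b(t)m(t)+a(t),\qquad t\in[0,T].$$

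Next, I would introduce the integrating factor $E(t):=\exp\!\bigl(-\int_0^t b(\tau)\,\ud\tau\bigr)$, which is well-defined and strictly positive because $b$ is continuous on $[0,T]$. A direct computation shows $E'(t)=-b(t)E(t)$, so
$$\frac{\ud}{\ud t}\bigl(m(t)E(t)\bigr)=\bigl(m'(t)-b(t)m(t)\bigr)E(t)\leq a(t)E(t).$$
Integrating this inequality from $0$ to $t$, using $E(0)=1$, and then dividing through by $E(t)>0$ gives
$$m(t)\leq m(0)\exp\!\Big(\int_0^t b(\tau)\,\ud\tau\Big)+\int_0^t a(\tau)\exp\!\Big(\int_\tau^t b(t')\,\ud t'\Big)\,\ud\tau,$$
which is precisely the claimed estimate.

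There is no serious obstacle here; the statement is a very standard Gronwall-type result. The only mildly delicate point is the passage from the squared inequality to the linear one, which relies essentially on the hypothesis $m(t)>0$ so that one may cancel the common factor $m(t)$. If $m$ were only assumed nonnegative and could vanish on a subset of $[0,T]$, one would instead work with the regularised quantity $m_\delta(t):=\sqrt{m^2(t)+\delta}$ for $\delta>0$, apply the same integrating-factor argument, and pass to the limit $\delta\to 0^+$; but under the present hypothesis this regularisation is unnecessary. The rest of the argument is routine calculus, and no tools from Besov theory or the Littlewood--Paley decomposition are needed.
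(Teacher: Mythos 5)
Your argument is correct and is essentially the standard proof of this Gronwall-type lemma; the paper itself gives no proof but simply cites it from the reference text, whose argument is the same integrating-factor computation (there carried out via the regularisation $\sqrt{m^2+\varepsilon^2}$ only because $m$ is not assumed strictly positive, a point you correctly identify as unnecessary under the hypothesis $m(t)>0$). Nothing further is needed.
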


In the paper, we also need some estimates for the following 1-D transport equation:
\begin{equation}\label{transport}
\left\{\begin{array}{l}
f_t+v\partial_{x}f=g,\ x\in\mathbb{R},\ t>0, \\
f(0,x)=f_0(x),\ x\in\mathbb{R}.
\end{array}\right.
\end{equation}

\begin{lemm}\cite{book,liy}\label{existence}
Let $1\leq p\leq\infty,\ 1\leq r\leq\infty,\ \theta> -\min(\frac 1 {p}, \frac 1 {p'})$. Let $f_0\in B^{\theta}_{p,r}$, $g\in L^1(0,T;B^{\theta}_{p,r})$, and $v\in L^\rho(0,T;B^{-M}_{\infty,\infty})$ for some $\rho>1$ and $M>0$ such that
\begin{align*}
\begin{array}{ll}
\partial_{x}v\in L^1(0,T;B^{\frac 1 {p}}_{p,\infty}\cap L^{\infty}), &\ \text{if}\ \theta<1+\frac 1 {p}, \\
\partial_{x}v\in L^1(0,T;B^{\theta}_{p,r}),\ &\text{if}\ \theta=1+\frac{1}{p},\ r>1, \\
\partial_{x}v\in L^1(0,T;B^{\theta-1}_{p,r}), &\ \text{if}\ \theta>1+\frac{1}{p}\ (or\ \theta=1+\frac 1 {p},\ r=1).
\end{array}	
\end{align*}
Then the problem \eqref{transport} has a unique solution $f$ in \\
-the space $\mathcal{C}\big([0,T];B^{\theta}_{p,r}\big)$, if $r<\infty$, \\
-the space $\Big(\bigcap_{{\theta}'<\theta}\mathcal{C}([0,T];B^{{\theta}'}_{p,\infty})\Big)\bigcap \mathcal{C}_w\big([0,T];B^{\theta}_{p,\infty}\big)$, if $r=\infty$.
\end{lemm}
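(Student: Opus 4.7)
The plan is to derive a Gronwall-type a priori estimate for $\|f(t)\|_{B^\theta_{p,r}}$ by a Littlewood--Paley analysis of \eqref{transport}, then bootstrap it into existence via a Friedrichs approximation scheme and into uniqueness by subtraction. The three sub-hypotheses on $\partial_x v$ will enter only through a single commutator estimate, and it is the splitting of that commutator that forces the three-case dichotomy in the statement.

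First I would localize \eqref{transport} in frequency by applying $\Delta_j$:
\[
\partial_t \Delta_j f + v\, \partial_x \Delta_j f = \Delta_j g + R_j, \qquad R_j := v\, \Delta_j\partial_x f - \Delta_j(v\partial_x f).
\]
Pairing against $|\Delta_j f|^{p-1}\mathrm{sgn}(\Delta_j f)$ and integrating by parts absorbs the transport term via $-\tfrac{1}{p}\int (\partial_x v)|\Delta_j f|^p\,\mathrm{d}x$, giving
\[
\|\Delta_j f(t)\|_{L^p} \le \|\Delta_j f_0\|_{L^p} + \int_0^t \Big( C\|\partial_x v\|_{L^\infty}\|\Delta_j f\|_{L^p} + \|\Delta_j g\|_{L^p} + \|R_j\|_{L^p}\Big)\mathrm{d}\tau.
\]
Multiplying by $2^{j\theta}$ and taking the $\ell^r$ norm reduces everything to the commutator estimate
\[
\big\|(2^{j\theta}\|R_j\|_{L^p})_j\big\|_{\ell^r} \le C\,\|\partial_x v\|_X\,\|f\|_{B^\theta_{p,r}},
\]
which, by a Bony-decomposition argument, holds with $X = B^{1/p}_{p,\infty}\cap L^\infty$ when $\theta<1+1/p$, with $X = B^\theta_{p,r}$ in the critical case $\theta=1+1/p$, $r>1$, and with $X = B^{\theta-1}_{p,r}$ in the remaining case --- matching exactly the three hypotheses. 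Invoking Lemma \ref{gwl} then yields
\[
\|f(t)\|_{B^\theta_{p,r}} \le e^{CV(t)}\Big(\|f_0\|_{B^\theta_{p,r}} + \int_0^t e^{-CV(\tau)}\|g(\tau)\|_{B^\theta_{p,r}}\,\mathrm{d}\tau\Big), \quad V(t):=\int_0^t \|\partial_x v(\tau)\|_X\,\mathrm{d}\tau.
\]

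For existence I would mollify by setting $f_0^n = S_n f_0$, $g^n = S_n g$, $v^n = S_n v$, solve the resulting smooth linear problem $f^n$ by the method of characteristics, and apply the a priori estimate uniformly in $n$ to get boundedness in $L^\infty(0,T;B^\theta_{p,r})$. Running the same estimate on $f^n - f^m$ in the weaker space $B^{\theta-1}_{p,r}$ --- where no commutator regularity is lost --- produces a Cauchy sequence in $\mathcal{C}([0,T];B^{\theta-1}_{p,r})$; the limit $f$ inherits the uniform $B^\theta_{p,r}$ bound via the Fatou property (Proposition \ref{Besov}(5)). Strong time continuity when $r<\infty$ is then extracted from $\partial_t f = g - v\partial_x f \in L^1(0,T;B^{\theta-1}_{p,r})$ together with the uniform-in-time approximation $\|f-S_jf\|_{B^\theta_{p,r}} \to 0$ that Proposition \ref{Besov}(2) provides only when $r<\infty$; for $r=\infty$ only weak-$*$ continuity at the top regularity is retained, yielding exactly the space stated. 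Uniqueness is immediate by applying the a priori estimate to the difference of two solutions with zero data and zero source.

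I expect the main obstacle to be the borderline commutator estimate at $\theta = 1+1/p$. The standard Bony splitting of $R_j$ into paraproducts and a remainder just fails to close with $\partial_x v$ only in $B^{1/p}_{p,\infty}\cap L^\infty$: the remainder piece requires either the logarithmic interpolation of Proposition \ref{prop}(2) (in the $r>1$ case, which forces strengthening to $X=B^\theta_{p,r}$) or a direct estimate using the algebra embedding $B^{1/p}_{p,1}\hookrightarrow L^\infty$ (in the $r=1$ case, which only requires $X=B^{\theta-1}_{p,r}$). Carefully tracking which paraproduct piece forces each alternative, and making the whole argument uniform in $p\in[1,\infty]$ and in the sign of $\theta - 1 - 1/p$, is the delicate technical core of the proof.
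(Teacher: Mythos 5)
The paper offers no proof of this lemma: it is quoted verbatim from the cited references (the Bahouri--Chemin--Danchin book and Li--Yin), so there is no in-paper argument to compare against, and your sketch is precisely the standard proof from those sources --- frequency localization, the commutator estimate $\|(2^{j\theta}\|[v,\Delta_j]\partial_x f\|_{L^p})_j\|_{\ell^r}\leq C\|\partial_x v\|_{X}\|f\|_{B^{\theta}_{p,r}}$ with the three choices of $X$ driving the three hypotheses, Gronwall, Friedrichs regularization, and uniqueness from the a priori bound. The outline is correct; the only points that need the care taken in the reference are the $L^p$ energy identity at the endpoints $p=1,\infty$ (where one works along the flow of $v$ rather than integrating $|\Delta_j f|^{p-1}\mathrm{sgn}(\Delta_j f)$ against the equation) and the Cauchy-sequence step in $B^{\theta-1}_{p,r}$ when $\theta-1\leq-\min(\frac{1}{p},\frac{1}{p'})$, where that space falls outside the admissible range and one must instead interpolate or pass to a compactness argument.
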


\begin{lemm}\label{priori estimate}\cite{book,liy}
Let $1\leq p,r\leq\infty,\ \theta>-min(\frac{1}{p},\frac{1}{p'}).$
There exists a constant $C$ such that for all solutions $f\in L^{\infty}(0,T;B^{\theta}_{p,r})$ of \eqref{transport} with initial data $f_0$ in $B^{\theta}_{p,r}$ and $g$ in $L^1(0,T;B^{\theta}_{p,r})$,
$$ \|f(t)\|_{B^{\theta}_{p,r}}\leq \|f_0\|_{B^{\theta}_{p,r}}+\int_0^t\|g(t')\|_{B^{\theta}_{p,r}}{\ud}t'+\int_0^t V'(t')\|f(t')\|_{B^{\theta}_{p,r}}{\ud}t' $$
or
$$ \|f(t)\|_{B^{\theta}_{p,r}}\leq e^{CV(t)}\Big(\|f_0\|_{B^{\theta}_{p,r}}+\int_0^t e^{-CV(t')}\|g(t')\|_{B^{\theta}_{p,r}}{\ud}t'\Big) $$
with
\begin{equation*}
V'(t)=\left\{\begin{array}{ll}
\|\partial_{x}v(t)\|_{B^{\frac 1 p}_{p,\infty}\cap L^{\infty}},\ &\text{if}\ \theta<1+\frac{1}{p}, \\
\|\partial_{x}v(t)\|_{B^{\theta}_{p,r}},\ &\text{if}\ \theta=1+\frac{1}{p},\ r>1, \\
\|\partial_{x}v(t)\|_{B^{\theta-1}_{p,r}},\ &\text{if}\ \theta>1+\frac{1}{p}\ (\text{or}\ \theta=1+\frac{1}{p},\ r=1).
\end{array}\right.
\end{equation*}
If $\theta>0$, then there exists a constant $C=C(p,r,\theta)$ such that the following statement holds
\begin{align*}
\|f(t)\|_{B^{\theta}_{p,r}}\leq \|f_0\|_{B^{\theta}_{p,r}}+\int_0^t\|g(\tau)\|_{B^{\theta}_{p,r}}{\ud}\tau+C\int_0^t \Big(\|f(\tau)\|_{B^{\theta}_{p,r}}\|\partial_{x}v(\tau)\|_{L^{\infty}}+\|\partial_{x}v(\tau)\|_{B^{\theta-1}_{p,r}}\|\partial_{x}f(\tau)\|_{L^{\infty}}\Big){\ud}\tau.	
\end{align*}
In particular, if $f=av+b,\ a,\ b\in\mathbb{R},$ then for all $\theta>0,$ $V'(t)=\|\partial_{x}v(t)\|_{L^{\infty}}.$
\end{lemm}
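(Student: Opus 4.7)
The plan is to localize in frequency and reduce the estimate to an $L^p$ bound on each Littlewood-Paley block, modulo a commutator correction. Applying $\Delta_j$ to \eqref{transport} gives
\begin{equation*}
\partial_t \Delta_j f + v\,\partial_x \Delta_j f = \Delta_j g + R_j, \qquad R_j := v\,\partial_x \Delta_j f - \Delta_j(v\,\partial_x f),
\end{equation*}
so $\Delta_j f$ itself solves a transport equation driven by the same velocity $v$, up to the commutator $R_j$ that measures how much $\Delta_j$ fails to commute with multiplication by $v$.

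Next I would perform a standard blockwise $L^p$ energy estimate: multiply by $|\Delta_j f|^{p-2}\Delta_j f$, integrate in $x$, and use the identity $\int v\,\partial_x\Delta_j f \cdot |\Delta_j f|^{p-2}\Delta_j f \,\ud x = -\frac{1}{p}\int \partial_x v \,|\Delta_j f|^p\,\ud x$, then divide by $\|\Delta_j f\|_{L^p}^{p-1}$. This yields
\begin{equation*}
\tfrac{\ud}{\ud t}\|\Delta_j f\|_{L^p} \leq \|\Delta_j g\|_{L^p} + \tfrac{1}{p}\|\partial_x v\|_{L^\infty}\|\Delta_j f\|_{L^p} + \|R_j\|_{L^p},
\end{equation*}
with the endpoint $p=\infty$ handled by integrating along the characteristics generated by $v$.

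The heart of the proof is the commutator estimate for $R_j$. Decomposing $v\,\partial_x f = T_v\partial_x f + T_{\partial_x f} v + R(v,\partial_x f)$ via Bony's paraproduct and applying the standard frequency-localized kernel estimates from \cite{book}, each piece contributes a bound of the form $2^{j\theta}\|R_j\|_{L^p} \leq C\,c_j(t)\,V'(t)\,\|f\|_{B^{\theta}_{p,r}}$ with $(c_j(t))\in \ell^r(\mathbb{Z})$ of norm at most $1$. The precise space in which $\partial_x v$ must live to recombine the paraproduct pieces back into $B^{\theta}_{p,r}$ is exactly what dictates the three-case split in $V'(t)$: below the Sobolev embedding threshold $\theta<1+\frac{1}{p}$ one controls the low-high paraproduct with $\partial_x v\in L^\infty\cap B^{1/p}_{p,\infty}$, while above the threshold the resonant piece forces the higher regularity $\partial_x v\in B^{\theta-1}_{p,r}$. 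Multiplying the blockwise inequality by $2^{j\theta}$, taking the $\ell^r$-norm, and using Minkowski's integral inequality to pull the norm past the time integral produces the first, integral form of the estimate.

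The exponential form is then an immediate consequence of the Gronwall inequality (Lemma \ref{gwl}) applied with $m(t)=\|f(t)\|_{B^{\theta}_{p,r}}$, $b(t)=CV'(t)$, and $a(t)=\|g(t)\|_{B^{\theta}_{p,r}}$. For the refined $\theta>0$ bound one reorganizes the commutator so that the transport-type contribution $\|\partial_x v\|_{L^\infty}\|f\|_{B^{\theta}_{p,r}}$ is isolated, and only the remaining high-frequency interaction is estimated by $\|\partial_x v\|_{B^{\theta-1}_{p,r}}\|\partial_x f\|_{L^\infty}$; the affine specialization $f=av+b$ collapses the commutator to its $L^\infty$ part and leaves only the first term. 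I expect the main obstacle to be the endpoint regime $\theta=1+\frac{1}{p}$, $r=1$, where the generic commutator bound loses the required summability: one must exploit the $\ell^1$ assumption sharply and pay by upgrading $\partial_x v$ to $B^{\theta-1}_{p,1}$, which is precisely the threshold built into the third case of $V'(t)$.
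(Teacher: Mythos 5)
This lemma is quoted in the paper from \cite{book,liy} without proof, and your outline reproduces precisely the standard argument behind it (frequency localization of the transport equation, the blockwise $L^p$ energy estimate, the Bony-decomposition commutator bounds that generate the three-case definition of $V'$, and Gronwall). The proposal is correct and takes essentially the same route as the cited source.
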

%In particular, when $0<\theta<1$, there is a new estimate.
%\begin{lemm}\label{new estimate}%\cite{F-G-L}
%Let $0<\theta<1,\ p,r\in[1,\infty]^2$. Suppose that $f_0$ in $B^{\theta}_{p,r}$, $g$ in $L^1([0,T];B^{\theta}_{p,r})$, $v,\ \partial_{x}v$ in $L^{1}([0,T];L^\infty)$ and $f\in L^{\infty}([0,T];B^{\theta}_{p,r})\cap C([0,T];S')$ solves \eqref{transport}, then $f\in C([0,T];B^{\theta}_{p,r})$. Moreover, there exists a constant C depending only on $\theta$ and such that the following estimate holds:
%$$ \|f(t)\|_{B^{\theta}_{p,r}}\leq \|f_0\|_{B^{\theta}_{p,r}}+\int_0^t\|g(t')\|_{B^{\theta}_{p,r}}dt'+\int_0^t V'(t')\|f(t')\|_{B^{\theta}_{p,r}}dt' $$
%or
%$$ \|f(t)\|_{B^{\theta}_{p,r}}\leq e^{CV(t)}\Big(\|f_0\|_{B^{\theta}_{p,r}}+\int_0^t e^{-CV(t')}\|g(t')\|_{B^{\theta}_{p,r}}dt'\Big) $$
%with $V'(t)=\|v(t)\|_{L^{\infty}}+\|\partial_{x}v(t)\|_{L^{\infty}}$.
%\end{lemm}
\begin{lemm}\label{continuous}\cite{book,liy}
Let $1\leq p<\infty$. Define $\overline{\mathbb{N}}=\mathbb{N}\cup\{\infty\}$. Suppose $f\in L^{1}\big(0,T;B^{\frac{1}{p}}_{p,1}\big)$ and $a_0\in B^{\frac{1}{p}}_{p,1}$. For $n\in\overline{\mathbb{N}}$, denote by $a^n\in \mathcal{C}\big([0,T];B^{\frac{1}{p}}_{p,1}\big)$ the solution of
\begin{equation}\label{an}
\left\{\begin{array}{l}
\partial_{t}a^n+A^n\partial_{x}a^n=f,\\
a^n(0,x)=a_0(x).
\end{array}\right.
\end{equation}
Assume for some $\beta\in L^{1}(0,T)$, $\sup\limits_{n\in\overline{\mathbb{N}}}\|A^n\|_{B^{1+\frac{1}{p}}_{p,1}}\leq \beta(t)$.
%\begin{align*}
%\sup\limits_{n\in\overline{\mathbb{N}}}\|A^n\|_{B^{1+\frac{1}{p}}_{p,1}}\leq \beta(t).
%\end{align*}	
If $A^{n}$ converges to $A^{\infty}$ in $L^{1}\big(0,T;B^{\frac{1}{p}}_{p,1}\big)$, then the sequence $\{a^n\}_{n\in\mathbb{N}}$ converges to $a^\infty$ in $\mathcal{C}\big([0,T];B^{\frac{1}{p}}_{p,1}\big)$.
\end{lemm}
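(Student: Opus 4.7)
The main obstacle I anticipate is that the difference $w^n:=a^n-a^\infty$ satisfies
$$\partial_t w^n+A^n\partial_x w^n=-(A^n-A^\infty)\partial_x a^\infty,\qquad w^n(0,\cdot)=0,$$
and the right‐hand side cannot be estimated directly in $B^{1/p}_{p,1}$: since $a^\infty$ lies only in $B^{1/p}_{p,1}$, the derivative $\partial_x a^\infty$ belongs only to $B^{1/p-1}_{p,1}$, and multiplying an element of $B^{1/p}_{p,1}$ by one of $B^{1/p-1}_{p,1}$ is precisely the borderline case $s_1+s_2=\max(0,\frac{2}{p}-1)$ of Lemma \ref{product}(2) in which no product estimate is available. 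The plan is to bypass this by smoothing the data so that the limit profile becomes one degree more regular, and then to pass to the limit in two independent parameters.

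First I would mollify the data: pick $N_\epsilon\to\infty$ and set $a_0^{\epsilon}:=S_{N_\epsilon}a_0$, $f^{\epsilon}:=S_{N_\epsilon}f$, so that $a_0^\epsilon\to a_0$ in $B^{1/p}_{p,1}$ and $f^\epsilon\to f$ in $L^1(0,T;B^{1/p}_{p,1})$ while $a_0^\epsilon\in B^{1+1/p}_{p,1}$ and $f^\epsilon\in L^1(0,T;B^{1+1/p}_{p,1})$. Denote by $a^{n,\epsilon}$ the solution of \eqref{an} with $a_0,f$ replaced by $a_0^\epsilon,f^\epsilon$. The uniform bound $\|A^n\|_{B^{1+1/p}_{p,1}}\leq\beta(t)$ supplies the hypothesis $\partial_x A^n\in L^1(0,T;B^{1/p}_{p,1})$, so Lemma \ref{existence} at $\theta=1+\frac{1}{p}$, $r=1$ places $a^{n,\epsilon}$ in $\mathcal{C}([0,T];B^{1+1/p}_{p,1})$ for every $n\in\overline{\mathbb{N}}$, and Lemma \ref{priori estimate} gives a bound on $\|a^{n,\epsilon}\|_{L^\infty(0,T;B^{1+1/p}_{p,1})}$ that is independent of $n$ (although it depends on $\epsilon$).

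Next, for each fixed $\epsilon$, I would prove $a^{n,\epsilon}\to a^{\infty,\epsilon}$ in $\mathcal{C}([0,T];B^{1/p}_{p,1})$ as $n\to\infty$. Because $a^{\infty,\epsilon}$ now lies in the algebra $B^{1+1/p}_{p,1}$, Lemma \ref{product}(1) with $s=1/p>0$ gives
$$\|(A^n-A^\infty)\partial_x a^{\infty,\epsilon}\|_{B^{1/p}_{p,1}}\leq C\|A^n-A^\infty\|_{B^{1/p}_{p,1}}\|a^{\infty,\epsilon}\|_{B^{1+1/p}_{p,1}},$$
which is integrable in time and converges to $0$ in $L^1(0,T)$ by hypothesis. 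Applying Lemma \ref{priori estimate} to $w^{n,\epsilon}:=a^{n,\epsilon}-a^{\infty,\epsilon}$ at $\theta=1/p$, whose loss $V'(t)\leq C\beta(t)$ is uniform in $n$, then yields $\|w^{n,\epsilon}\|_{\mathcal{C}([0,T];B^{1/p}_{p,1})}\longrightarrow 0$.

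Finally, I would show that $a^{n,\epsilon}\to a^n$ in $\mathcal{C}([0,T];B^{1/p}_{p,1})$ as $\epsilon\to 0$, \emph{uniformly} in $n$. Indeed $a^{n,\epsilon}-a^n$ satisfies the transport equation with the same velocity $A^n$, forcing $f^\epsilon-f$ and initial datum $a_0^\epsilon-a_0$, so Lemma \ref{priori estimate} at $\theta=1/p$ gives
$$\sup_{n\in\overline{\mathbb{N}}}\|a^{n,\epsilon}-a^n\|_{\mathcal{C}([0,T];B^{1/p}_{p,1})}\leq e^{C\int_0^T\beta(s)\,\ud s}\Big(\|a_0^\epsilon-a_0\|_{B^{1/p}_{p,1}}+\|f^\epsilon-f\|_{L^1(0,T;B^{1/p}_{p,1})}\Big)\longrightarrow 0.$$
A standard $\epsilon/3$ triangle-inequality argument combining the last two displays then closes the proof; the hard step is the previous one, where mollification converts the borderline Moser estimate into the innocuous algebra property of $B^{1/p}_{p,1}$.
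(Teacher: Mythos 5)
Your proof is correct. The paper states this lemma without proof, citing \cite{book,liy}, and your double-approximation scheme --- mollifying the data so that the limit profile $a^{\infty,\epsilon}$ gains one derivative and the borderline product becomes the harmless estimate $\|(A^n-A^\infty)\partial_x a^{\infty,\epsilon}\|_{B^{1/p}_{p,1}}\leq C\|A^n-A^\infty\|_{B^{1/p}_{p,1}}\|a^{\infty,\epsilon}\|_{B^{1+1/p}_{p,1}}$, then passing to the limit in $n$ for fixed $\epsilon$ and controlling the mollification error uniformly in $n\in\overline{\mathbb{N}}$ --- is precisely the standard argument given in those references, so there is nothing to add.
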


\section{The proof of abstract theorems}
\par
In this section, we give the proof of Theorem \ref{lagrabstr} and Theorem \ref{lagrabstr2}.
\begin{proof}[\rm{\textbf{The proof of Theorem \ref{lagrabstr}:}}] For the sake of simplicity, we set $A(u)=u$ in the following proof (other cases are similar).

\textbf{Step 1. Existence.} %The construction and uniformly bounded estimates of a family of approximate solution sequence $(u_n)_{n\in\mathbb{N}}$ are similar to \cite{book}, and here we omit it.

We firstly set $u^0\triangleq0$. Define a sequence $\{u^n\}_{n\in\mathbb{N}}$ of smooth functions by solving the following linear transport equations:
\begin{equation}\label{eq2}
    \left\{\begin{array}{l}
    u_t^{n+1}+u^nu_x^{n+1}=F(u^n), \\
    u^{n+1}|_{t=0}=u_0,
    \end{array}\right.
\end{equation}
%In fact, the construction and uniformly bounded estimates of a family of $(u_n)_{n\in\mathbb{N}}$ are similar to \cite{book}.
Combining Lemma \ref{existence} with \eqref{fkl}, we obtain a unique solution $u^{n+1}$ of \eqref{eq2} in $E^p_T$.
Moreover, by Lemma \ref{priori estimate} and \eqref{fkl}, we have
\begin{equation}\label{fuzhiineq23}
    \|u^{n+1}(t)\|_{B^{1+\frac{1}{p}}_{p,1}}\leq e^{C\int_0^t \|\partial_{x}u^n\|_{B^{\frac{1}{p}}_{p,1}}{\ud}t'}
    \Big(\|u_0\|_{B^{1+\frac{1}{p}}_{p,1}}+\int_0^t e^{-C\int_0^{t'} \|\partial_{x}u^n\|_{B^{\frac{1}{p}}_{p,1}}{\ud}t''}\big(
    \|u^n\|^{k+1}_{B^{1+\frac{1}{p}}_{p,1}}+1\big){\ud}t'\Big),
\end{equation}
Fix a time $T$ such that it satifies $0<T<\frac{1}{C(\|u_0\|^{k+1}_{B^{\frac{1}{p}+1}_{p,1}}+1)}$. Similar to the proof of Theorem 3.21 in \cite{book}, we obtain that
$$\|u^{n+1}(t)\|_{L^\infty\big(0,T;B^{1+\frac{1}{p}}_{p,1}\big)}\leq C_{u_0}.$$
Therefore, $\{u^n\}_{n\in \mathbb{N}}$ is bounded in $L^\infty\big(0,T;B^{1+\frac{1}{p}}_{p,1}\big)$.

Then, we will use the compactness method for the approximating sequence $\{u_n\}_{n\in\mathbb{N}}$ to get a solution $u$ of \eqref{abstract}. Since $u_n$ is uniformly bounded in $L^\infty\big(0,T;B^{1+\frac{1}{p}}_{p,1}\big)$, we can deduce from \eqref{fkl} that $\partial_{t}u_n$ is uniformly bounded in $L^\infty\big(0,T;B^{\frac{1}{p}}_{p,1}\big)$. Thus,
\begin{align*}
u_n\ \text{is uniformly bounded in}\ \mathcal{C}\big([0,T];B^{1+\frac{1}{p}}_{p,1}\big)\cap \mathcal{C}^{\frac{1}{2}}\big([0,T];B^{\frac{1}{p}}_{p,1}\big).	
\end{align*}
Let $(\phi_j)_{j\in\mathbb{N}}$ be a sequence of smooth functions with value in $[0,1]$ supported in the ball $B(0,j+1)$ and equal to 1 on $B(0,j)$. Notice that the map $z\mapsto\phi_jz$ is compact from $B^{1+\frac{1}{p}}_{p,1}$ to $B^{\frac{1}{p}}_{p,1}$ by Theorem 2.94 in \cite{book}. Taking advantage of Ascoli's theorem and Cantor's diagonal process, there exists some function $u_j$ such that for any $j\in\mathbb{N}$, $\phi_ju_n$ tends to $u_j$. From that, we can easily deduce that there exists some function $u$ such that for all $\phi\in\mathcal{D}$, $\phi u_n$ tends to $\phi u$ in $\mathcal{C}\big([0,T];B^{\frac{1}{p}}_{p,1}\big)$. Combining the uniform boundness of $u_n$ and the Fatou property for Besov spaces, we realiy obtain that $u\in L^\infty\big(0,T;B^{1+\frac{1}{p}}_{p,1}\big)$. By virtue of the interpolation inequality, we have $\phi u_n$ tends to $\phi u$ in $\mathcal{C}\big([0,T];B^{1+\frac{1}{p}-\varepsilon}_{p,1}\big)$ for any $\varepsilon>0$. Next, since $F(u)$ is a good `operator', it is a routine process to prove that $u$ satifies Eq. \eqref{abstract}. Thanks to the right side of the Eq. \eqref{abstract}, we get $\partial_{t}u\in \mathcal{C}\big([0,T];B^{\frac{1}{p}}_{p,1}\big)$. In sum, we obtain $u$ satisfies  \eqref{abstract} and belongs to $E^p_T$.

\textbf{Step 2. Uniqueness.}
	
Define $U(t,\xi)=u(t,y(t,\xi))$. Thus, $U_{\xi}(t,\xi)=u_x(t,y(t,\xi))y_{\xi}(t,y(t,\xi))$. Owing to \eqref{abstract} and \eqref{ODE}, we can get
\begin{align}
&y(t,\xi)=\xi+\int_0^tU{\ud}\tau,\label{yn}\\
&y_\xi(t,\xi)=1+\int_0^tU_\xi {\ud}\tau,\label{ynxi}\\
&U_t(t,\xi)=\widetilde{F}(U,y),\label{Un}\\
&U_{t\xi}(t,\xi)=\left(\widetilde{F}(U,y)\right)_{\xi}.\label{Unxi}
\end{align}
Since $u$ is uniformly bounded in $\mathcal{C}\big([0,T];B^{1+\frac{1}{p}}_{p,1}\big)\hookrightarrow \mathcal{C}\big([0,T];W^{1,p}\cap W^{1,\infty}\big)$, we can easily deduce that $y_{\xi}$ is bounded in $L^\infty(0,T;L^\infty)$ by the Gronwall inequality. So $U(t,\xi)$ is bounded in $L^\infty(0,T;W^{1,\infty})$. Moreover, by \eqref{ynxi} we deduce that $\frac{1}{2}\leq y_\xi\leq C_{u_0}$ for $T>0$ small enough. This implies the flow map $y(t,\xi)$ is one-order diffeomorphism, which seems to be a more simple operation than \cite{lps}. In this case, we can easily prove $U(t,\xi)\in L^\infty(0,T;W^{1,p})$. Indeed,
\begin{align*}
	&\|U\|_{L^p}^p=\int_{-\infty}^{+\infty}|U(t,\xi)|^p{\ud}\xi=\int_{-\infty}^{+\infty}|u(t,y(t,\xi))|^p\frac{1}{y_\xi}{\ud}y\leq2\|u\|_{L^p}^p\leq C;\\
	&\|U_\xi\|_{L^p}^p=\int_{-\infty}^{+\infty}|U_\xi(t,\xi)|^p{\ud}\xi=\int_{-\infty}^{+\infty}|u_x(t,y(t,\xi))|^p{y_\xi}^{p-1}{\ud}y\leq C_{u_{0}}^{p-1}\|u_x\|_{L^p}^p\leq C.
\end{align*}
Thus, we obtain that $U(t,\xi)\in L^\infty(0,T;W^{1,p}\cap W^{1,\infty})$, $y(t,\xi)-\xi\in L^\infty(0,T;W^{1,p}\cap W^{1,\infty})$ and $\frac{1}{2}\leq y_\xi(t,\xi)\leq C_{u_0}$ for any $t\in[0,T]$.

Now we prove the uniqueness. Suppose that $u_1,u_2$ are two solutions to \eqref{abstract}, then $U_{i}(t,\xi)=u_{i}(t,y_{i}(t,\xi))$ satifies \eqref{Un} and \eqref{Unxi} for $i=1,2$. Hence, \eqref{wideFkl}, \eqref{Un}, \eqref{Unxi} together with the Growall inequality yield
\begin{align*}
&\quad\|U_{1}-U_{2}\|_{W^{1,\infty}\cap W^{1,p}}+\|y_{1}-y_{2}\|_{W^{1,\infty}\cap W^{1,p}}\notag\\
&\leq C(\|U_{1}(0)-U_{2}(0)\|_{W^{1,\infty}\cap W^{1,p}}+\|y_{1}(0)-y_{2}(0)\|_{W^{1,\infty}\cap W^{1,p}})\notag\\
&\quad +C\int_{0}^{T}\|\widetilde{F}(U_1,y_1)-\widetilde{F}(U_2,y_2)\|_{W^{1,\infty}\cap W^{1,p}}+\|U_1-U_2\|_{W^{1,\infty}\cap W^{1,p}}{\ud}t\notag\\
&\leq C(\|U_{1}(0)-U_{2}(0)\|_{W^{1,\infty}\cap W^{1,p}}+0)+C\int_{0}^{T}\Big(\|U_1-U_2\|_{W^{1,\infty}\cap W^{1,p}}+\|y_1-y_2\|_{W^{1,\infty}\cap W^{1,p}}\Big){\ud}t\notag\\
&\leq C\|u_{1}(0)-u_{2}(0)\|_{B^{1+\frac{1}{p}}_{p,1}},
\end{align*}
where $y_1(0)=y_2(0)=\xi$, $U_i(0)=u_i(0),~i=1,2$. It follows that
\begin{align*}
	\|u_{1}-u_{2}\|_{L^p}\leq& C\|u_{1}\circ y_{1}-u_{2}\circ y_{1}\|_{L^p}\\
	\leq& C\|u_{1}\circ y_{1}-u_{2}\circ y_{2}+u_{2}\circ y_{2}-u_{2}\circ y_{1}\|_{L^p}\\
	\leq& C\|U_{1}-U_{2}\|_{L^p}+C\|u_{2x}\|_{L^\infty}\|y_{1}-y_{2}\|_{L^p}\\
	\leq& C\|u_{1}(0)-u_{2}(0)\|_{B^{1+\frac{1}{p}}_{p,1}}.
\end{align*}
By the embedding $L^p\hookrightarrow B^0_{p,\infty}$, we get
\begin{align*}
	\|u_{1}-u_{2}\|_{B^0_{p,\infty}}\leq C\|u_{1}-u_{2}\|_{L^p}\leq C\|u_{1}(0)-u_{2}(0)\|_{B^{1+\frac{1}{p}}_{p,1}}.	
\end{align*}
So if $u_1(0)=u_2(0)$, we can immediately obtain the uniqueness.

\textbf{Step 3. The continuous dependence}.

Assume that $u^n_{0}$ tends to $u^\infty_{0}$ in $B^{1+\frac{1}{p}}_{p,1}$. Similar to \cite{weikui}, we can find the solutions $u^n,\ u^\infty$ of \eqref{abstract} with a common lifespan $T$. By \textbf{Step 1--Step 2}, we have $u^n,\ u^\infty$ are uniformly bounded in $L^{\infty}\big(0,T;B^{1+\frac{1}{p}}_{p,1}\big)$ and
\begin{align*}
\|\big(u^n-u^\infty\big)(t)\|_{B^0_{p,\infty}}\leq  C\|u_{0}^n-u_{0}^\infty\|_{B^{1+\frac{1}{p}}_{p,1}},\ \forall t\in[0,T].
\end{align*}
Taking advantage of the interpolation inequality, we see that $u^n\rightarrow u^\infty$ in $\mathcal{C}\big([0,T];B^{1+\frac{1}{p}-\epsilon}_{p,1}\big)$ for any $\epsilon>0$. Choosing $\epsilon=1$, we have
\begin{align}
u^n\rightarrow u^\infty\qquad\text{in}\quad \mathcal{C}\big([0,T];B^{\frac{1}{p}}_{p,1}\big).\label{Untend}
\end{align}

In order to prove $u^n\rightarrow u^\infty$ in $\mathcal{C}\big([0,T];B^{\frac{1}{p}+1}_{p,1}\big)$, we next only need to prove $\partial_{x}u^n\rightarrow \partial_{x}u^\infty$ in $\mathcal{C}\big([0,T];B^{\frac{1}{p}}_{p,1}\big)$. For simplicity, let $v^n=\partial_{x}u^n,\ v^\infty=\partial_{x}u^\infty$. Split $v^n$ into $w^n+z^n$ with $(w^n,z^n)$ satisfying
\begin{equation*}
	\left\{\begin{array}{l}
		\partial_{t}w^n+u^n\partial_{x}w^n=\partial_{x}\Big(F(u^{\infty})\Big)-(u^{\infty}_{x})^{2},\\
		w^n(0,x)=v^n_0=\partial_{x}u^n_0
	\end{array}\right.
\end{equation*}
and
\begin{equation*}
\left\{\begin{array}{l}
\partial_{t}z^n+u^{n}\partial_{x}z^n=\partial_{x}\Big(F(u^{n})-F(u^{\infty})\Big)-\big((u^{n}_{x})^2-(u^{\infty}_{x})^2\big)\\
z^n(0,x)=v^n_0-v^\infty_0=\partial_{x}u^n_0-\partial_{x}u^\infty_0.
\end{array}\right.
\end{equation*}
\eqref{fkl} and Lemma \ref{continuous} thus ensure that
\begin{align}
w^n\rightarrow w^\infty\qquad \text{in}\quad  \mathcal{C}\big([0,T];B^{\frac{1}{p}}_{p,1}\big).\label{winfty}
\end{align}
Thanks to \eqref{Fkl}, we have
\begin{align*}
\|\partial_{x}\big(F(u^{n})-F(u^{\infty})\big)\|_{B^{\frac{1}{p}}_{p,1}}
\leq C\|u^n-u^\infty\|_{B^{1+\frac{1}{p}}_{p,1}}
\leq C\Big(\|u^n-u^\infty\|_{B^{\frac{1}{p}}_{p,1}}+\|u^n_x-u^\infty_x\|_{B^{\frac{1}{p}}_{p,1}}\Big)
\end{align*}
and
\begin{align*}
\|(u^{n}_{x}+u^{\infty}_{x})(u^{n}_{x}-u^{\infty}_{x})\|_{B^{\frac{1}{p}}_{p,1}}
\leq  C\|u^n_x-u^\infty_x\|_{B^{\frac{1}{p}}_{p,1}}.
\end{align*}
It follows that for all $n\in\mathbb{N}$,
\begin{align}
\|z^n(t)\|_{B^{\frac{1}{p}}_{p,1}}
\leq&C\Big(\|v^n_0-v^\infty_0\|_{B^{\frac{1}{p}}_{p,1}}+\int_0^t\|u^n-u^\infty\|_{B^{\frac{1}{p}}_{p,1}}+\|u^n_x-u^\infty_x\|_{B^{\frac{1}{p}}_{p,1}}{\ud}\tau\Big)\notag\\
\leq&C\Big(\|v^n_0-v^\infty_0\|_{B^{\frac{1}{p}}_{p,1}}+\int_0^t\|u^n-u^\infty\|_{B^{\frac{1}{p}}_{p,1}}+\|w^n-w^\infty\|_{B^{\frac{1}{p}}_{p,1}}+\|z^n\|_{B^{\frac{1}{p}}_{p,1}}{\ud}\tau\Big).\label{zn}
\end{align}
Using the facts that
\begin{itemize}
\item [-]$v_0^n$ tends to $v_0^{\infty}$ in $B^{\frac{1}{p}}_{p,1}$;
\item [-]$u^n$ tends to $u^{\infty}$ in $\mathcal{C}\big([0,T];B^{\frac{1}{p}}_{p,1}\big)$;
\item [-]$w^n$ tends to $w^{\infty}$ in $\mathcal{C}\big([0,T];B^{\frac{1}{p}}_{p,1}\big)$,
\end{itemize}
and then applying the Gronwall lemma, we conclude that $z^n$ tends to $0$ in $\mathcal{C}\big([0,T];B^{\frac{1}{p}}_{p,1}\big)$. By Lemma \ref{existence}--\ref{priori estimate}, we have
$z^\infty=0$ in $\mathcal{C}\big([0,T];B^{\frac{1}{p}}_{p,1}\big)$.

Therefore,
\begin{align*}
\|v^n-v^\infty\|_{L^\infty\big(0,T;B^{\frac{1}{p}}_{p,1}\big)}\leq&\|w^n-w^\infty\|_{L^\infty\big(0,T;B^{\frac{1}{p}}_{p,1}\big)}+\|z^n-z^\infty\|_{L^\infty\big(0,T;B^{\frac{1}{p}}_{p,1}\big)}\\
\leq&\|w^n-w^\infty\|_{L^\infty\big(0,T;B^{\frac{1}{p}}_{p,1}\big)}+\|z^n\|_{L^\infty\big(0,T;B^{\frac{1}{p}}_{p,1}\big)}\quad\rightarrow 0\quad\text{as}\ n\rightarrow\infty,
\end{align*}
that is
\begin{align*}
\partial_{x}u^n\rightarrow \partial_{x}u^\infty\qquad\text{in}\quad \mathcal{C}\big([0,T];B^{\frac{1}{p}}_{p,1}\big).
\end{align*}
Hence, we prove the continuous dependence of \eqref{abstract} in critial Besov spaces $\mathcal{C}\big([0,T];B^{1+\frac{1}{p}}_{p,1}\big)$ with $p\in[1,+\infty)$.

Consequently, combining with \textbf{Step 1--Step 3}, we finish the proof of Theorem \ref{lagrabstr}.
\end{proof}

\begin{proof}[\rm{\textbf{The proof of Theorem \ref{lagrabstr2}:}}]
The proof of Theorem \ref{lagrabstr2} is similar to that of Theorem \ref{lagrabstr}, and we won't give it here (for the continuous dependence one may find more details in \cite{weikui}).
\end{proof}

\section{Applications}

In this section, we aim to prove Theorem \ref{CH}, Theorem \ref{N} and Theorem \ref{2CH} by applying Theorem \ref{lagrabstr} and Theorem \ref{lagrabstr2}.
\begin{proof}[\rm{\textbf{The proof of Theorem \ref{CH}:}}]
Let $A(u)=u,\ F(u)=-\partial_{x}(1-\partial_{xx})^{-1}\Big(u^2+\frac{1}{2}u^2_x\Big)$. Then \eqref{abstract} becomes the classical CH equation:
\begin{equation}\label{ch}
\left\{\begin{array}{ll}
\partial_{t}u+u\partial_{x}u=-\partial_{x}(1-\partial_{xx})^{-1}\Big(u^2+\frac{1}{2}u^2_x\Big):=-\partial_{x}{\rm{p}}\ast\Big(u^2+\frac{1}{2}u^2_x\Big),&\quad t>0,\quad x\in\mathbb{R},\\
u(t,x)|_{t=0}=u_0(x),&\quad x\in\mathbb{R},
\end{array}\right.
\end{equation}
where ${\rm{p}}(x)=\frac{1}{2}e^{-|x|}$.

According to Theorem \ref{lagrabstr}, we only need to verify \eqref{fkl}--\eqref{Fkl} hold. For $p\in[1,\infty)$, it's not hard to see that $F$ is `good' operator by an approximation argument and
\begin{align*}
&\|F(u)\|_{B^{1+\frac{1}{p}}_{p,1}}\leq C\Big(\|u\|_{B^{1+\frac{1}{p}}_{p,1}}+\|u_{x}\|_{B^{\frac{1}{p}}_{p,1}}\Big)\leq C\Big(\|u\|_{B^{1+\frac{1}{p}}_{p,1}}+1\Big);\\
&\|F(u_1)-F(u_2)\|_{B^{\frac{1}{p}+1}_{p,1}}\leq C\|u_1-u_2\|_{B^{1+\frac{1}{p}}_{p,1}}.	
\end{align*}
So \eqref{fkl} and \eqref{Fkl} hold, which implies \eqref{ch} has a solution $u\in E^p_T$. To prove the local well-posedness, one also need to verify \eqref{wideFkl}.

To prove \eqref{wideFkl}, we establish the Lagrangian coordinates:
\begin{equation}\label{uyxi}
	\left\{\begin{array}{ll}
		\frac{{\ud}}{{\ud}t}y=u\big(t,y(t,\xi)\big),\\
		y(0,\xi)=\xi.
	\end{array}\right.
\end{equation}
Setting $U_1:=u_1(t,y_{1}(t,\xi))$ and $U_2:=u_2(t,y_{2}(t,\xi))$, we see for $i=1,2$,
\begin{align}
	&\frac{{\ud}}{{\ud}t}U_{i}=\widetilde{F}(U_{i},y_{i})=\frac{1}{2}\int_{-\infty}^{+\infty}{\rm sign}\big(y_i(t,\xi)-x\big)e^{-|y_i(t,\xi)-x|}(u_i^2+\frac{1}{2}u_{ix}^2){\ud}x,\label{ut}\\
	&\frac{{\ud}}{{\ud}t}U_{i\xi}=\left(\widetilde{F}(U_{i},y_{i})\right)_{\xi}=U_i^2y_{i\xi}+\frac{1}{2}\frac{U^2_{i\xi}}{y_{i\xi}}-\frac{y_{i\xi}}{2}\int_{-\infty}^{+\infty}e^{-|y_i(t,\xi)-x|}(u_i^2+\frac{1}{2}u_{ix}^2){\ud}x.\label{uxt}
\end{align}
Similar to the proof of Theorem \ref{lagrabstr} in \textbf{Step 2}, for sufficiently small $T>0$, we deduce that
\begin{align}\label{step2}
U_i(t,\xi)\in L^\infty_T(W^{1,p}\cap W^{1,\infty}),~y_i(t,\xi)-\xi\in L^\infty_T(W^{1,p}\cap W^{1,\infty})~ \text{and}~\frac{1}{2}\leq y_{i\xi}\leq C_{u_0}.	
\end{align}

First, we estimate $\|\widetilde{F}(U_{1},y_{1})-\widetilde{F}(U_{2},y_{2})\|_{L^{p}\cap L^{\infty}}$. The main difficulty in it is to estimate
 $$\int_{-\infty}^{+\infty}{\rm sign}\big(y_1(\xi)-x\big)e^{-|y_1(\xi)-x|}u_{1x}^2{\ud}x-\int_{-\infty}^{+\infty}{\rm sign}\big(y_2(\xi)-x\big)e^{-|y_2(\xi)-x|}u_{2x}^2{\ud}x.$$
Since $y_i(i=1,2)$ is monotonically increasing, then ${\rm sign}\big(y_i(\xi)-y_i(\eta)\big)={\rm sign}\big(\xi-\eta\big)$. Thus, we have
\begin{align}
&\int_{-\infty}^{+\infty}{\rm sign}\big(y_1(\xi)-x\big)e^{-|y_1(\xi)-x|}u_{1x}^2{\ud}x-\int_{-\infty}^{+\infty}{\rm sign}\big(y_2(\xi)-x\big)e^{-|y_2(\xi)-x|}u_{2x}^2{\ud}x\notag\\
=&\int_{-\infty}^{+\infty}{\rm sign}\big(y_1(\xi)-y_1(\eta)\big)e^{-|y_1(\xi)-y_1(\eta)|}\frac{U_{1\eta}^2}{y_{1\eta}}{\ud}\eta-\int_{-\infty}^{+\infty}{\rm sign}\big(y_2(\xi)-y_2(\eta)\big)e^{-|y_2(\xi)-y_2(\eta)|}\frac{U_{2\eta}^2}{y_{2\eta}}{\ud}\eta\notag\\
=&\int_{-\infty}^{+\infty}{\rm sign}\big(\xi-\eta\big)e^{-|y_1(\xi)-y_1(\eta)|}\frac{U_{1\eta}^2}{y_{1\eta}}{\ud}\eta-\int_{-\infty}^{+\infty}{\rm sign}\big(\xi-\eta\big)e^{-|y_2(\xi)-y_2(\eta)|}\frac{U_{2\eta}^2}{y_{2\eta}}{\ud}\eta\notag\\
=&\int_{-\infty}^{+\infty}{\rm sign}\big(\xi-\eta\big)\left(e^{-|y_1(\xi)-y_1(\eta)|}-e^{-|y_2(\xi)-y_2(\eta)|}\right)\frac{U_{1\eta}^2}{y_{1\eta}}{\ud}\eta\notag\\
&+\int_{-\infty}^{+\infty}{\rm sign}\big(\xi-\eta\big)e^{-|y_2(\xi)-y_2(\eta)|}\left(\frac{U_{2\eta}^2}{y_{2\eta}}-\frac{U_{1\eta}^2}{y_{1\eta}}\right){\ud}\eta\notag\\
:=&I_1+I_2.\label{u1}
\end{align}
If $\xi>\eta$ $(\text{or}\ \xi<\eta)$, then $y_i(\xi)>y_i(\eta)$ $\big(\text{or}\ y_i(\xi)<y_i(\eta)\big)$. Hence, we gain
\begin{align}
I_1=&\int_{-\infty}^{\xi}\left(e^{-(y_1(\xi)-y_1(\eta))}-e^{-(y_2(\xi)-y_2(\eta))}\right)\frac{U_{1\eta}^2}{y_{1\eta}}{\ud}\eta-\int_{\xi}^{+\infty}\left(e^{y_1(\xi)-y_1(\eta)}-e^{y_2(\xi)-y_2(\eta)}\right)\frac{U_{1\eta}^2}{y_{1\eta}}{\ud}\eta\notag\\
=&\int_{-\infty}^{\xi}e^{-(\xi-\eta)}\left(e^{-\int_0^t(U_1(\xi)-U_1(\eta)){\ud}\tau}-e^{-\int_0^t(U_2(\xi)-U_2(\eta)){\ud}\tau}\right)\frac{U_{1\eta}^2}{y_{1\eta}}{\ud}\eta\notag\\
&-\int_{\xi}^{+\infty}e^{\xi-\eta}\left(e^{\int_0^tU_1(\xi)-U_1(\eta){\ud}\tau}-e^{\int_0^tU_2(\xi)-U_2(\eta){\ud}\tau}\right)\frac{U_{1\eta}^2}{y_{1\eta}}{\ud}\eta\notag\\
\leq&C\|U_1-U_2\|_{L^\infty}\Big[\int_{-\infty}^{\xi}e^{-(\xi-\eta)}\frac{U_{1\eta}^2}{y_{1\eta}}{\ud}\eta+\int_{\xi}^{+\infty}e^{\xi-\eta}\frac{U_{1\eta}^2}{y_{1\eta}}{\ud}\eta\Big]\notag\\
\leq&C\|U_1-U_2\|_{L^\infty}\Big[1_{\geq0}(x)e^{-|x|}\ast\frac{U_{1\eta}^2}{y_{1\eta}}+1_{\leq0}(x)e^{-|x|}\ast\frac{U_{1\eta}^2}{y_{1\eta}}\Big].\label{i1}
\end{align}
In the same way, we have
\begin{align}
I_2\leq C\Big[1_{\geq0}(x)e^{-|x|}\ast\Big(|U_{1\eta}-U_{2\eta}|+|y_{1\eta}-y_{2\eta}|\Big)+1_{\leq0}(x)e^{-|x|}\ast\Big(|U_{1\eta}-U_{2\eta}|+|y_{1\eta}-y_{2\eta}|\Big)\Big].\label{i2}
\end{align}
Combining with the identities \eqref{u1}--\eqref{i2}, we find
\begin{align}
&\int_{-\infty}^{+\infty}{\rm sign}\big(y_1(\xi)-x\big)e^{-|y_1(\xi)-x|}u_{1x}^2{\ud}x-\int_{-\infty}^{+\infty}{\rm sign}\big(y_2(\xi)-x\big)e^{-|y_2(\xi)-x|}u_{2x}^2{\ud}x\notag\\
\leq&C\|U_1-U_2\|_{L^\infty}\Big[1_{\geq0}(x)e^{-|x|}\ast\frac{U_{1\eta}^2}{y_{1\eta}}+1_{\leq0}(x)e^{-|x|}\ast\frac{U_{1\eta}^2}{y_{1\eta}}\Big]\notag\\
&+C\Big[1_{\geq0}(x)e^{-|x|}\ast\Big(|U_{1\eta}-U_{2\eta}|+|y_{1\eta}-y_{2\eta}|\Big)+1_{\leq0}(x)e^{-|x|}\ast\Big(|U_{1\eta}-U_{2\eta}|+|y_{1\eta}-y_{2\eta}|\Big)\Big].\label{i12}
\end{align}
Similar to \eqref{i12}, we get
\begin{align}
	&\int_{-\infty}^{+\infty}{\rm sign}\big(y_1(\xi)-x\big)e^{-|y_1(\xi)-x|}u_{1}^2{\ud}x-\int_{-\infty}^{+\infty}{\rm sign}\big(y_2(\xi)-x\big)e^{-|y_2(\xi)-x|}u_{2}^2{\ud}x\notag\\
	\leq&C\|U_1-U_2\|_{L^\infty}\Big[1_{\geq0}(x)e^{-|x|}\ast\Big( U_{1\eta}^2y_{1\eta}\Big)+1_{\leq0}(x)e^{-|x|}\ast\Big( U_{1\eta}^2y_{1\eta}\Big)\Big]\notag\\
	&+C\Big[1_{\geq0}(x)e^{-|x|}\ast\Big(|U_{1}-U_{2}|+|y_{1\eta}-y_{2\eta}|\Big)+1_{\leq0}(x)e^{-|x|}\ast\Big(|U_{1}-U_{2}|+|y_{1\eta}-y_{2\eta}|\Big)\Big].\label{j12}
\end{align}
It follows from \eqref{ut}, \eqref{i12} and \eqref{j12} that
\begin{align*}
&|\widetilde{F}(U_{1},y_{1})-\widetilde{F}(U_{2},y_{2})|\\
\leq&\Big[1_{\geq0}(x)e^{-|x|}\ast\Big(|U_{1}-U_{2}|+|U_{1\eta}-U_{2\eta}|+|y_{1\eta}-y_{2\eta}|\Big)\Big]\\
&+C\Big[1_{\leq0}(x)e^{-|x|}\ast\Big(|U_{1}-U_{2}|+|U_{1\eta}-U_{2\eta}|+|y_{1\eta}-y_{2\eta}|\Big)\Big]\\
&+C\|U_1-U_2\|_{L^\infty}\Big[1_{\geq0}(x)e^{-|x|}\ast\Big(\frac{U_{1\eta}^2}{y_{1\eta}}+U_{1\eta}^2y_{1\eta}\Big)+1_{\leq0}(x)e^{-|x|}\ast\Big(\frac{U_{1\eta}^2}{y_{1\eta}}+U_{1\eta}^2y_{1\eta}\Big)\Big].
\end{align*}
Combining the Young's inequality and \eqref{step2}, we infer that
\begin{align}
&\|\widetilde{F}(U_{1},y_{1})-\widetilde{F}(U_{2},y_{2})\|_{L^\infty\cap L^p}\notag\\
\leq& C\Big(\|U_1-U_2\|_{L^\infty\cap L^p}+\|U_{1\xi}-U_{2\xi}\|_{L^\infty\cap L^p}+\|y_{1\xi}-y_{2\xi}\|_{L^\infty\cap L^p}\Big).\label{U12}
\end{align}

Then, we estimate $\|(\widetilde{F}(U_{1},y_{1})-\widetilde{F}(U_{2},y_{2}))_{\xi}\|_{L^{p}\cap L^{\infty}}$. Since $-(1-\partial_{xx})^{-1}\partial_{xx}={\rm{Id}}-(1-\partial_{xx})^{-1}$, taking some similar but more simple operations, one can obtain
\begin{align}
	&\|\big(\widetilde{F}(U_{1},y_{1})\big)_{\xi}-\big(\widetilde{F}(U_{2},y_{2})\big)_{\xi}\|_{L^\infty\cap L^p}\notag\\
	\leq& C\Big(\|U_1-U_2\|_{L^\infty\cap L^p}+\|U_{1\xi}-U_{2\xi}\|_{L^\infty\cap L^p}+\|y_{1\xi}-y_{2\xi}\|_{L^\infty\cap L^p}\Big).\label{Ux12}
\end{align}
Thus, we have
\begin{align*}
\|\widetilde{F}(U_{1},y_{1})-\widetilde{F}(U_{2},y_{2})\|_{W^{1,\infty}\cap W^{1,p}}\leq C\Big(\|U_{1}-U_{2}\|_{W^{1,\infty}\cap W^{1,p}}+\|y_{1}-y_{2}\|_{W^{1,\infty}\cap W^{1,p}}\Big),	
\end{align*}
i.e. \eqref{wideFkl} holds. This completes the proof of Theorem \ref{CH}.
\end{proof}

Next, we give another well-posedness result for the CH equation in a more general Banach space which also doesn't contain the peakons  ($B^{1+\frac 1 p}_{p,1}\hookrightarrow B^{1+\frac 1 p}_{p,r}\cap W^{1,\infty},~p,r\in [1,\infty)$):
\begin{theo}\label{general}
	Let $u_0\in B^{1+\frac 1 p}_{p,r}\cap W^{1,\infty}$ with $1\leq p,\ r<\infty$. Then there exists a time $T>0$ such that the CH equation is locally well-posed in $u(t,x)\in \mathcal{C}\big([0,T];B^{1+\frac{1}{p}}_{p,r}\big)\cap \mathcal{C}^1\big([0,T];B^{\frac{1}{p}}_{p,r}\big)$ in the sense of Hadamard.
\end{theo}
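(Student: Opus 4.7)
The plan is to run exactly the three-step Hadamard scheme of Theorem \ref{lagrabstr} for the CH equation \eqref{ch}, with the only novelty being that $B^{1+\frac{1}{p}}_{p,r}\not\hookrightarrow W^{1,\infty}$ when $r>1$, so the $W^{1,\infty}$ norm has to be propagated as an independent quantity. The extra hypothesis $u_0\in W^{1,\infty}$ plays precisely this role and excludes the singular data used in the ill-posedness construction of \cite{glmy}.

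For the existence step, I would iterate with $u^0\equiv 0$ and
$$\partial_t u^{n+1}+u^n\partial_x u^{n+1}=F(u^n),\qquad u^{n+1}\big|_{t=0}=u_0,\qquad F(u)=-\partial_x(1-\partial_{xx})^{-1}\Big(u^2+\tfrac12 u_x^2\Big).$$
The Moser estimate of type~(1) in Lemma \ref{product} gives $\|F(u)\|_{B^{1+1/p}_{p,r}}\leq C\|u\|_{W^{1,\infty}}\|u\|_{B^{1+1/p}_{p,r}}$, and the last line of Lemma \ref{priori estimate} (the form valid for $\theta>0$) then yields a closed inequality for $\|u^{n+1}\|_{B^{1+1/p}_{p,r}}$ in which only the $L^\infty$ norms of $u^n$, $u^n_x$ and $u^{n+1}_x$ appear. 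To bound these uniformly I would switch to the Lagrangian variables \eqref{uyxi} and use the kernel representations \eqref{ut}--\eqref{uxt}; Young's inequality together with $\tfrac12\leq y_\xi\leq C_{u_0}$ on short time yields
$$\tfrac{d}{dt}\|U^{n+1}\|_{W^{1,\infty}}\leq C\bigl(1+\|U^n\|_{W^{1,\infty}}^2+\|U^{n+1}\|_{W^{1,\infty}}^2\bigr),$$
which produces a uniform $W^{1,\infty}$ bound on $[0,T]$ for a time $T=T(\|u_0\|_{W^{1,\infty}})>0$, exactly in the spirit of \cite{lps}. Once uniform bounds in $B^{1+1/p}_{p,r}$ and $W^{1,\infty}$ are in hand, compactness, the Fatou property and the `good operator' property of $F$ produce a limit $u\in \mathcal{C}([0,T];B^{1+1/p}_{p,r})\cap \mathcal{C}^1([0,T];B^{1/p}_{p,r})$ solving \eqref{ch}.

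Uniqueness and continuous dependence I would take straight from the proof of Theorem \ref{CH}: the Lagrangian estimates \eqref{u1}--\eqref{Ux12} only use $(U,y-\xi)\in L^\infty_T(W^{1,p}\cap W^{1,\infty})$ and $\tfrac12\leq y_\xi\leq C_{u_0}$, both of which follow from $u\in L^\infty_T(B^{1+1/p}_{p,r}\cap W^{1,\infty})$, and they yield $\|u_1-u_2\|_{L^p}\leq C\|u_1(0)-u_2(0)\|_{B^{1+1/p}_{p,r}\cap W^{1,\infty}}$, hence uniqueness. For continuous dependence, the splitting $\partial_x u^n=w^n+z^n$ used in Step~3 of Theorem \ref{lagrabstr} carries over unchanged once Lemma \ref{continuous} is applied at the regularity $B^{1/p}_{p,r}$ (a minor variant whose proof is identical).

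The one genuinely delicate point, and the main obstacle, is the failure of $B^{1/p}_{p,r}$ to be an algebra when $r>1$: the Moser estimate of type~(2) is simply unavailable, so every nonlinear term must be closed through the type~(1) inequality, which inevitably forces $\|u_x\|_{L^\infty}$ to appear on the right-hand side. The whole argument therefore hinges on propagating the $W^{1,\infty}$ norm along the Lagrangian flow independently of the Besov norm; the hypothesis $u_0\in W^{1,\infty}$ and the explicit convolution structure of $F$ in Lagrangian coordinates are precisely what make this bootstrap work, while removing either of them brings the ill-posedness of \cite{glmy} back into play.
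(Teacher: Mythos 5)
Your proposal follows essentially the same route as the paper: the paper's own (very terse) proof simply restates the three structural conditions \eqref{fkl}--\eqref{Fkl} with $B^{1+\frac{1}{p}}_{p,1}$ replaced by the intersection space $B^{1+\frac{1}{p}}_{p,r}\cap W^{1,\infty}$ --- which is exactly your observation that the type~(1) Moser estimates close once $\|u\|_{W^{1,\infty}}$ is propagated alongside the Besov norm --- and then invokes the iteration/compactness, Lagrangian-uniqueness and $w^n+z^n$ splitting machinery of Theorem \ref{lagrabstr} verbatim, remarking only that continuity of the flow map into $\mathcal{C}([0,T];W^{1,\infty})$ itself is lost. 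Your write-up supplies more detail than the paper does (in particular the explicit $W^{1,\infty}$ bootstrap along the flow), but there is no methodological difference.
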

\begin{proof}
Let $A(u)=u,\ F(u)=-\partial_{x}(1-\partial_{xx})^{-1}\Big(u^2+\frac{1}{2}u^2_x\Big)$.
Then, we can easily deduce that
\begin{align}
&\|F(u)\|_{B^{1+\frac{1}{p}}_{p,r}\cap W^{1,\infty}}\leq C\|u\|^2_{B^{1+\frac{1}{p}}_{p,r}\cap W^{1,\infty}};\\
&\|\widetilde{F}(U,y)-\widetilde{F}(\bar{U},\bar{y})\|_{W^{1,p}\cap W^{1,\infty}}\leq C_{u_0}\Big(\|U-\bar{U}\|_{W^{1,\infty}\cap W^{1,p}}+\|y-\bar{y}\|_{W^{1,\infty}\cap W^{1,p}}\Big);\\
&\|F(u)-F(\bar{u})\|_{B^{1+\frac{1}{p}}_{p,r}\cap W^{1,\infty}}\leq C_{u_0}\|u-\bar{u}\|_{B^{1+\frac{1}{p}}_{p,r}\cap W^{1,\infty}}.
\end{align}
Similar to the proof of Theorem \ref{lagrabstr}, we can obtain a unique solution $u(t,x)\in \mathcal{C}\big([0,T];B^{1+\frac{1}{p}}_{p,r}\big)\cap \mathcal{C}^1\big([0,T];B^{\frac{1}{p}}_{p,r}\big)$. But only for $W^{1,\infty}$, one can not obtain the continuous dependence since $W^{1,\infty}$ is indivisible, see \cite{lps} for more details.
\end{proof}

Likewise, for the Novikov equation and two-component Camassa-Holm system, one can also obtain the local well-posedness by using Theorem \ref{lagrabstr} and Theorem \ref{lagrabstr2}. It also makes sense for the general Bansch spaces in Theorem \ref{general} .
\begin{proof}[\rm{\textbf{The proof of Theorem \ref{N}:}}]
Let $A(u)=u^{2},\ F(u,u_x)=-(1-\partial_{xx})^{-1}\Big(\partial_{x}\big(\frac{3}{2}uu_{x}^{2}+u^{3}\big)+\frac{1}{2}u_{x}^{3}\Big)$. Then \eqref{abstract} becomes the Novikov equation
\begin{equation*}
\left\{\begin{array}{ll}
u_{t}-u_{xxt}=3uu_{x}u_{xx}+u^{2}u_{xxx}-4u^{2}u_{x},&\quad t>0,\quad x\in\mathbb{R},\\
u(t,x)|_{t=0}=u_0(x),&\quad x\in\mathbb{R}.
\end{array}\right.
\end{equation*}
Applying Theorem \ref{lagrabstr}, we can also gain the well-posedness of the Cauchy problem for Novikov equation in critial Besov space $B^{1+\frac 1 p}_{p,1}$.
\end{proof}

\begin{proof}[\rm{\textbf{The proof of Theorem \ref{2CH}:}}]
Let $A(u)=u,\ F(u,\eta)=-\partial_{x}(1-\partial_{xx})^{-1}(u^{2}+\frac{1}{2}u^{2}_{x}+\frac{1}{2}\eta^{2}+\eta)$ and $h(\eta)=1+\eta$. Then we can get the local well-posedness of the Cauchy problem for two-component Camassa-Holm system
\begin{equation*}
\left\{\begin{array}{ll}
\partial_{t}u+u\partial_{x}u=-\partial_{x}(1-\partial_{xx})^{-1}(u^{2}+\frac{1}{2}u^{2}_{x}+\frac{1}{2}\eta^{2}+\eta),&\quad t>0,\quad x\in\mathbb{R},\\
\partial_{t}\eta+u\partial_{x}\eta=-\partial_{x}u(1+\eta),&\quad t>0,\quad x\in\mathbb{R},\\
u|_{t=0}=u_{0}(x),\ \eta|_{t=0}=\eta_{0},&\quad x\in\mathbb{R}
\end{array}\right.
\end{equation*}
in critial Besov space $B^{1+\frac 1 p}_{p,1}\times B^{\frac 1 p}_{p,1}$ by Theorem \ref{lagrabstr2}.
\end{proof}

\noindent\textbf{Acknowledgements.}
Ye and Yin were partially supported by NNSFC (No. 11671407), FDCT (No. 0091/2013/A3), Guangdong Special Support Program (No. 8-2015)
and the key project of NSF of Guangdong Province (No. 2016A030311004). Guo was partially supported by the Guangdong Basic and Applied Basic Research Foundation (No. 2020A1515111092) and Research Fund of Guangdong-Hong Kong-Macao Joint Laboratory for Intelligent Micro-Nano Optoelectronic Technology (No. 2020B1212030010).

%\addcontentsline{toc}{section}{\refname}
\bibliographystyle{plain}
\bibliography{reference}

%\addcontentsline{toc}{section}{\refname}
%\begin{thebibliography}{99}
%\small{
%\addtolength{\itemsep}{-1.0em}
%\setlength{\itemsep}{-4pt}

%	}
%\end{thebibliography}

\end{document}